\numberwithin{equation}{section}
\newtheorem{theorem}{Theorem}[section]
\newtheorem{lemma}[theorem]{Lemma}
\newtheorem{proposition}[theorem]{Proposition}
\newtheorem{con}[theorem]{Conjecture}
\newcommand{\RN}[1]{%
  \textup{\uppercase\expandafter{\romannumeral#1}}%
}
\def\captionfont@{\footnotesize}
\def\captionheadfont@{\scshape}
\long\def\@makecaption#1#2{%
  \vspace{2mm}
  \setbox\@tempboxa\vbox{\color@setgroup
    \advance\hsize-6pc\noindent
    \captionfont@\captionheadfont@#1\@xp\@ifnotempty\@xp
        {\@cdr#2\@nil}{.\captionfont@\upshape\enspace#2}%
    \unskip\kern-6pc\par
    \global\setbox\@ne\lastbox\color@endgroup}%
  \ifhbox\@ne 
    \setbox\@ne\hbox{\unhbox\@ne\unskip\unskip\unpenalty\unkern}%
  \fi
  \ifdim\wd\@tempboxa=\z@ 
    \setbox\@ne\hbox to\columnwidth{\hss\kern-6pc\box\@ne\hss}%
  \else 
    \setbox\@ne\vbox{\unvbox\@tempboxa\parskip\z@skip
        \noindent\unhbox\@ne\advance\hsize-6pc\par}%
\fi
  \ifnum\@tempcnta<64 
    \addvspace\abovecaptionskip
    \moveright 3pc\box\@ne
  \else 
    \moveright 3pc\box\@ne
    \nobreak
    \vskip\belowcaptionskip
  \fi
\relax
}
\def\writefig#1 #2 #3 {\rlap{\kern #1 truecm
\raise #2 truecm \hbox{#3}}}
\renewcommand{\tilde}{\widetilde}
\renewcommand{\hat}{\widehat}
\newcommand{\cF}{\ensuremath{\mathcal F}}
\newcommand{\cP}{\ensuremath{\mathcal P}}
\newcommand{\N}{{\ensuremath{\mathbb N}} } 
\renewcommand{\P}{{\ensuremath{\mathbb P}} }
\newcommand{\bbZ}{{\ensuremath{\mathbb Z}} }
\newcommand{\eps}{\varepsilon}
\newcommand{\1}[1]{\mathbf{1}_{\left\lbrace   #1   \right\rbrace }}
\renewcommand{\t}{\tau}
\renewcommand{\l}{\lambda}
\renewcommand{\a}{\alpha}
\renewcommand{\a}{\alpha}
\newcommand{\gb}{\beta}
\renewcommand{\d}{\delta}
\newcommand{\w}{\omega}
\newcommand{\bP}{{\ensuremath{\mathbf P}} }
\newcommand{\bE}{{\ensuremath{\mathbf E}} }
\newcommand{\E}{\mathbb{E}}
\newcommand{\Z}{\mathbb{Z}}
\newcommand{\R}{\mathbb{R}}
\newcommand{\g}{\gamma}
\renewcommand{\b}{\beta}
\begin{document}
\title{Directed Polymer for very heavy tailed random walks }

\author[Roberto Viveros]{Roberto Viveros}
 \address{Roberto Viveros \hfill\break
IMPA\\
Estrada Dona Castorina, 110\\ Rio de Janeiro 22460-320 \\ Brazil.}
\email{rviveros@impa.br}

\keywords{polymer model, free energy}

\begin{abstract}
In the present work, we investigate the case of Directed Polymer
in a Random Environment (DPRE), when the increments of the random walk are heavy-tailed with tail-exponent equal to zero ($\bP[|X_1|\ge n]$ decays slower than any power of $n$). This case has not yet been studied in the context of directed polymers and present key differences with the simple symmetric random walk case and the cases where the increments belong to the domain of attraction of an $\alpha$-stable law, where $\alpha \in (0, 2]$. We establish the absence of a very strong
disorder regime - that is, the free energy equals zero at every temperature - for every disorder distribution. We also prove that a strong disorder regime (partition function converging to zero at low temperature) may exist or not depending on finer properties of the random walk: we establish non-matching necessary and sufficient conditions for having a phase transition from weak to strong disorder. In particular our results imply that for this directed polymer model, very strong disorder is not equivalent to strong disorder, shedding a new light on a long standing conjecture concerning the original nearest-neighbor DPRE. 
\end{abstract}

\maketitle


\section{Introduction}
Directed polymer in random environment is a model for elastic
molecules interacting with random impurities. It appeared originally in the physics literature in the study
of the interface for the Ising model \cite{huse} and has become an interesting subject of study for many authors ever since (see \cite{review, saintf} for a review on the matter). 

Loosely speaking, the model consists on a random walk (of law denoted by $\bP$) on the integer lattice $\Z^{1+d}$, which stretches in the time direction, and interacts with a random space-time environment (of law denoted by $\P$) whose intensity is parameterized by some constant $\b \geq 0$ (inverse temperature). Given a fixed realization of the environment (sometimes also referred as the disorder), new weights are assigned to the walks. The $\bP$-expectation of this weight is the partition function of the system and the Liapunov exponent of this expectation is the quenched free energy (see the formal definitions later). 

Most of the literature concerning the study of directed polymers associates it with a simple symmetric random walk \cite{comets,bolthausen,cometsyoshido,hubert} or when the distribution of the increments belongs to the domain of attraction of an $\alpha$-stable law for some $\a \in (0, 2]$ \cite{comets long jump, mat, wei}.

It is known that there is a phase transition both in the limit of the partition functions and also in the free energy. In particular, there is a critical value $\b_c$ below which the sequence of normalized partition functions has a strictly positive	limit $\P$-a.s. (weak disorder), while above $\b_c$ the limit is zero $\P$-a.s. (strong disorder). Moreover, there is a second critical value $\bar{\b_c}$ below which the quenched free energy is equal to its annealed counterpart, while above it is strictly smaller than it (very strong disorder).  It is not hard to see that  $\b_c \leq \bar{\b_c}$ and a question of interest is whether these critical points are different.

Informally, in the weak disorder regime, the polymer paths are globally not affected by the environment, for instance, showing diffusivity when $\bP$ is the SRW while displaying localization phenomena and superdiffusivity in the strong disorder regime.

So far, it has been shown that $\b_c = \bar{\b_c} = 0$ for the nearest-neighbor directed polymer on $\Z^{d+1}$ for $d = 1$ in [11] and $d = 2$ in	[15], and for the long-range directed polymer with underlying random walks in the domain of attraction of an $\alpha$-stable law for some $\alpha \in (1, 2]$ in [16] for $d = 1$. A second moment computation of the partition function shows that $\b_c > 0$ whenever the random walk is transient \cite{bolthausen} (see \cite{viveros} for a study of the phase diagram when the environment displays a heavier tails), but the question of whether these two critical points coincide remains open whenever $\b_c > 0$. It has been conjectured that $\b_c = \bar{\b_c}$.

Our aim in this paper is to examine the case when the exponent of the distribution of the increments is equal to one. Specifically, for $d = 1$, assuming that the random walk is defined as $S_n = X_1 + ... + X_n$, where 
$\left\lbrace X_i: i \in \N\right\rbrace  $ is a { sequence  of i.i.d. random variables (also known as the increments) taking values in $\bbZ$. We assume that the increments have symmetric distribution and that  for $n\in \Z\setminus\{0\}$ we have}
\begin{equation} \label{1}
\bP \left[X_1 = n \right] =:K(n) = \frac{L(n)}{n}, 
\end{equation} 
where $L(\cdot)$ is a slowly varying function at $\pm \infty$. 

Interestingly, the phenomenology in this case is different than what has been seen before. We show that the quenched free energy is equal to the annealed free energy at every temperature ($\bar{\b_c} = \infty$) and that under some additional hypothesis, the strong regime is non-trivial ($ \b_c < \infty$), proving that the conjecture cannot hold in complete generality.

The first result is inspired by the work in \cite{alexzyg} in which an analogous result is proven, for the pinning model: the quenched critical point and the annealed one coincide for any given value of $\b \geq 0$ when the law of the renewal process $\t$ has loop exponent one, i.e. 
\begin{equation}
\P \left[ \t = n\right] = \frac{L(n)}{n}. 
\end{equation}
for some slowly varying function $L$. We also mention the work in \cite{julien} where low disorder relevance  is proven, in the hierarchical pinning model at every temperature, in the $b = s$ case. These are analogous notions of very strong disorder for the pinning model and the hierarchical pinning model respectively. 

In our second and third results we prove a sufficient and a necessary condition on $\bP$ and $\P$ for $ \b_c < \infty$. This has no analogous version for the pinning model, as there is no notion of weak disorder developed in that context so far.

The organization of the rest of the introduction goes as follows: In the next section we give the formal definition of the model and state already known facts. Then we present our results and give some comments on the  extra hypothesis needed and methods used in the proofs.   

%
%
%

\subsection{Polymer measure}
On the space $ \left( {\left( \Z^d\right) }^{\N}, \cP(\Z^d)^{\otimes \N}\right) $ of sequences $S:=(S_n)_{n\geq 0}$, let $\bP$ be a probability measure that satisfies:
\begin{equation}
\begin{split}\label{first}
&S_0 = 0,\\
& \{S_{n} - S_{n-1}\}_{n\geq 1}\text{ is an i.i.d. sequence.}
\end{split}
\end{equation}
We say that $\bP$ is a random walk on $\Z^d$. Most of the results in the literature assumes that $\bP$ is the law of the nearest-neighbor symmetric random walk:
\begin{equation}
\bP [S_1 = e_j] = \bP [S_1 = - e_j] = \frac{1}{2d},
\end{equation}
where $\{e_1,...,e_d\}$ is the canonical basis of $\R^d$, but the results stated in this sub-section are true in the general setting \eqref{first}.

Independently, also consider a set of i.i.d. random variables $\w:= \{\w_{n,z}: n \in \N, z \in \Z^{d}\}$, called \textit{ the environment}, defined on a probability space $(\Lambda, \cF, \P)$, that satisfies,
\begin{equation}
\E \left[ \exp(\beta \w_{n,z}) \right] < \infty, 
\end{equation}
for any $\b \in \R$. The \textit{polymer measure} $\bP_{N}^{\beta,\eta}$ is the probability measure in $ \left( {\left( \Z^d\right) }^{\N}, \cP(\Z^d)^{\otimes \N}\right) $ describe by its Radon-Nikodym derivative with respect to $\bP$: For a fixed value of $\b$ (called the inverse temperature) and $N \in \N$ we let
\begin{equation} \label{poly}
\dfrac{\mathrm{d} \bP_{N}^{\beta,\w}}{\mathrm{d} \bP}(S) = \dfrac{1}{Z_{N}^{\beta,\w}} \exp \left(\beta \sum_{n=1}^{N} \w_{n,S_n} \right). 
\end{equation}
The positive normalization factor $Z_{N}^{\beta,\w}$ (called the \textit{partition function}) makes $\bP_{N}^{\beta,\w}$ a probability measure. Consider the re-normalized partition function 
\begin{equation}
W_N^{\beta,\eta} := \frac{Z_{N}^{\beta,\eta}}{\E \left[ Z_{N}^{\beta,\eta}\right] }. 
\end{equation}
In \cite{bolthausen}, Bolthausen observed that the sequence $\left\lbrace W_N, \mathcal{G}_N \right\rbrace_{N \in \N}$ is a positive martingale, where $\left\lbrace \mathcal{G}_N \right\rbrace _{N \geq 0}$ is the filtration defined by $\mathcal{G}_N := \sigma\{\w_{n,z}: 1\leq n \leq N, z \in \Z \} $. By the classical martingale theory, it follows that the limit 
\begin{equation}
W_{\infty}^{\beta,\w} := \lim_{N \to \infty} W_{N}^{\beta,\w},
\end{equation}
exists $\P$-\textit{a.s.} and is a non-negative random variable. Moreover, the event $\{W_{\infty}^{\beta,\eta} = 0\}$ belongs to the tail $\sigma$-field of $\{\mathcal{G}_N, N \geq 0\}$. Hence, by Kolmogorov's $0-1$ Law,
\begin{align}\label{dichotomy}
&\P \left\lbrace  W_{\infty}^{\beta, \w}  > 0  \right\rbrace \in \{0,1\}. 
\end{align}
Following standard terminology we say that we have \textit{weak disorder} if $W_{\infty}^{\beta} > 0$ $\P$-\textit{a.s.} and \textit{strong disorder} if $W_{\infty}^{\beta} = 0$ $\P$-\textit{a.s.} In \cite{comets}, it is shown that there exists a critical value $\b_c \in [0,\infty]$, depending possibly on the environment distribution, such that there is weak disorder for $\b \in [0,\b_c)$ and strong disorder for $\b > \b_c$. The \textit{quenched free energy} is defined as
\begin{equation} \label{freenergy2}
F(\beta) := \lim_{N \to \infty} \dfrac{1}{N}   \log  Z_{N}^{\beta,\w} = \lim_{N \to \infty} \dfrac{1}{N}  \E \log  Z_{N}^{\beta,\w}.
\end{equation}
It is known that this limit exists and does not depend on $\w$ (see \cite[Proposition 2.5]{cometsyoshido} for the nearest-neighbor case and \cite{bates} for the general case), except on a set of measure zero. By Jensen's Inequality we have that $F(\b) \leq \l(\b) $, where $\l(\b) := \log \E \exp (\b \w)$ (the \textit{annealed free energy}). Also, it is not hard to see that
\begin{equation} \label{13}
F(\b) < \l(\b) \implies \lim_{N \to \infty}   W_{N}^{\beta,\w} = 0 \quad \P-\textit{a.s.}
\end{equation}
Thus, the case $p(\b):=F(\b) - \l(\b)  < 0$ is called the\textit{ very strong disorder}. As a function, $p(\cdot)$ is continuous and non-increasing. There is a critical value $\bar{\b_c}$ such that $p(\b) = 0$ if $\b \in [0,\bar{\b_c}]$ and $p(\b) < 0$ if $\b > \bar{\b_c}$. As noted before, $\b_c \leq \bar{\b_c}$.

It is conjectured that there is  no intermediate phase between weak disorder and very strong disorder (i.e., $\b_c = \bar{\b_c}$) but so far this has only been proved for the simple symmetric directed polymer on dimensions $d = 1$ and $d = 2$ in which $\b_c = \bar{\b_c} = 0$ \cite{hubert} and for the long-range directed polymer where the underlying random walk in the domain of attraction of an $\alpha$-stable law for some $\alpha \in (1, 2]$ in \cite{wei} for $d = 1$.

\subsection{The results}

For the rest of the paper, we assume that the law $\bP$ of the random walk satisfies \eqref{1}.
\begin{theorem} \label{T1}
	Consider the polymer measure \eqref{poly} and assume that the distribution of the increments satisfies \eqref{1} and that $K(n) > 0$ for all $n \in \Z$ then, 
	\begin{equation}
	p(\b) = 0,
	\end{equation}
	for all $\b \in \R$, which implies that there is no very strong disorder regime.
\end{theorem}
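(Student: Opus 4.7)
The inequality $p(\b)\le 0$ is standard: it is Jensen applied to the definition of $F$ in \eqref{freenergy2} and is recorded explicitly right before \eqref{13}. The content of the theorem is therefore the reverse inequality $F(\b)\ge\l(\b)$, equivalently
\begin{equation}
\liminf_{N\to\infty}\frac{1}{N}\E\log Z_N^{\b,\w}\ \ge\ \l(\b).
\end{equation}
The plan is to exhibit, for every $\gep>0$, a family of paths whose joint contribution to $Z_N^{\b,\w}$ is at least $e^{N(\l(\b)-\gep)}$ with $\P$-probability tending to $1$ as $N\to\infty$. The construction must use the tail $K(n)=L(n)/n$ essentially: because this kernel is positive on all of $\Z$ (by assumption) and decays only slowly, at each step the walk can reach essentially every site at a merely logarithmic cost, which is what enables it to ``sample the environment almost freely''.

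Fix $a\in\R$ (a target disorder level, to be optimized at the end) and let $U_n:=\{z\in\Z:\w_{n,z}\ge a\}$. Under $\P$ these form an i.i.d.\ Bernoulli$(c)$ family of subsets of $\Z$ with $c:=\P[\w\ge a]>0$. Restricting the partition function to paths visiting $U_n$ at every step gives the deterministic lower bound
\begin{equation}
Z_N^{\b,\w}\ \ge\ e^{\b a N}\,\bP\bigl[S_n\in U_n\text{ for every }1\le n\le N\bigr].
\end{equation}
Taking logarithms, integrating over $\P$, and then optimizing $a$ via the Legendre transform of $\l$ (so that $\log c+\b a$ saturates $\l(\b)$) will give $F(\b)\ge\l(\b)-\gep$, provided one establishes the quenched lower bound
\begin{equation}
\E\log\bP\bigl[S_n\in U_n\text{ for every }1\le n\le N\bigr]\ \ge\ N\log c - o(N).
\end{equation}

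The core of the argument is this last estimate. Heuristically, since $K(n)=L(n)/n$ is positive and regularly varying with index $-1$, the typical spacing between consecutive elements of $U_n$ is of order $1/c$, reachable in one step with probability $K(1/c)\sim c\,L(1/c)$, so the per-step logarithmic cost of threading a path through the good sites is $\log c+\log L(1/c)$; summed over $N$ steps the leading term is $N\log c$, matching the annealed prediction. A naive second-moment bound on $\bP[S_n\in U_n\ \forall n]$ is doomed, because the walk with this tail is recurrent (its characteristic function is in the Cauchy regime) and the intersection local time of two independent copies of $S$ diverges, so the argument has to stay genuinely quenched.

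The principal obstacle I foresee is controlling the slowly varying correction $\log L(1/c)$ uniformly. A greedy strategy that simply selects the nearest good site at each time step only yields the bound up to an additive error of $N\log L(1/c)$, which need not vanish when $a$ is tuned so that Cram\'er duality $\log c+\b a\to\l(\b)$ is saturated. A likely remedy, in the spirit of Alexander–Zygouras's argument \cite{alexzyg} for the analogous loop-exponent-one pinning model, is to coarse-grain the walk into mesoscopic blocks and, within each block, sum over an exponentially large family of admissible paths through the good sites rather than greedily select one; the extra path entropy then absorbs the slowly varying factor, at the price of a careful combinatorial book-keeping over scales.
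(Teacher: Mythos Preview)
Your approach has a genuine gap at the optimization step. You propose to threshold the environment at a fixed level $a$, obtaining
\[
\frac{1}{N}\log Z_N^{\b,\w}\ \ge\ \b a+\frac{1}{N}\log\bP\bigl[S_n\in U_n\text{ for all }n\le N\bigr],
\]
and then to choose $a$ so that $\b a+\log c$ ``saturates'' $\l(\b)$, where $c=\P[\w\ge a]$. But Chernoff's bound gives $\P[\w\ge a]\le e^{-\b a+\l(\b)}$, so $\b a+\log c\le\l(\b)$ for \emph{every} $a$, and for generic environments the supremum is strictly smaller. For instance, with a Bernoulli$(p)$ environment on $\{0,1\}$ one has $\sup_a[\b a+\log\P[\w\ge a]]=\max(0,\b+\log p)$, whereas $\l(\b)=\log(1-p+pe^{\b})$; these differ whenever $p<1$ and $\b>0$. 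Even granting your quenched estimate $\E\log\bP[S_n\in U_n\ \forall n]\ge N\log c-o(N)$ (which itself requires work), the resulting lower bound on $p(\b)$ is $\sup_a[\b a+\log c]-\l(\b)<0$, not $0$. No amount of path entropy recovered by coarse-graining can close this deficit, because it is created already at the level of the deterministic inequality $Z_N\ge e^{\b aN}\bP[\cdot]$, which discards the contribution of sites where $\w<a$.

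The paper avoids this by coarse-graining the \emph{normalized partition function} rather than the environment. Space-time is tiled by $N\times 2N^2$ rectangles, and a rectangle is declared $\eta$-good when its restricted normalized partition function $\tilde W_N$ (paths started and ended at the center, conditioned to stay inside) satisfies $\tilde W_N\ge\eta$. Since $\E[\tilde W_N]=1$, a Paley--Zygmund/second-moment argument (Lemma~2.2) produces $\eta\in[1/2,e^{C_\b N}]$ with $p_\eta:=\P[\tilde W_N\ge\eta]\ge c\,\eta^{-1}(2+\log\eta)^{-2}$. One then threads the walk through one good rectangle per column: the gain per block is at least $\log\eta$, and the cost of the long jump to the next good rectangle is $\log K(\text{geometric}(p_\eta)\cdot N^2)\approx -(1+\gep)\log(1/p_\eta)\approx -(1+\gep)\log\eta$, using that $K(n)=L(n)/n$ with $L$ slowly varying. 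The two nearly cancel, leaving an error $-\gep C_\b+o_N(1)$. Your instinct to coarse-grain \`a la Alexander--Zygouras is correct; the crucial point you are missing is that the goodness criterion must involve the full local partition function, not a single threshold on $\w$.
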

The extra assumption $K(n) > 0$ appears only in Lemma 2.1 and is not really necessary. It is used to avoid technical details that are not part of the main ideas of the proof.

The result of the first theorem contrasts with the cases that have been studied before, in particular, in the $\a$-stable case, $p(\b)<0$ for sufficiently large $\b$ \cite[Proposition 5.1]{comets long jump}. The next result gives a sufficient condition for which $\b_c < \infty$ which means there is a strong disorder phase. Important quantities here are the entropy $-\sum_{ n\in  \bbZ} K(n) \log K(n)$  of the walk and the mass on the essential supremum of the marginal distribution of $\w_{n,z}$.
\begin{theorem} \label{T2}
	If the distributions of the increments and the environment satisfy
	\begin{equation} \label{entropy}
	\b \l'(\b) - \l(\b) > \sum_{ n\in  \bbZ} K(n) \log \frac{1}{K(n)},
	\end{equation}
	then $${ W^{\beta,\omega}_\infty=0 }\quad  \P-\textit{a.s.}$$
	
	{ \noindent In particular if $\lim_{\b \to \infty}  \b \l'(\b) - \l(\b) = \infty$ then
		\begin{equation} \label{implik}
		\sum_{ n\in  \bbZ} K(n) \log \frac{1}{K(n)}<\infty \quad \Rightarrow 
		\quad  \gb_c <\infty.
		\end{equation}}
\end{theorem}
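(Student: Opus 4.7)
The plan is to establish strong disorder through a spine (size-biasing) argument. I would enlarge the probability space to include a random walk path $\Pi=(\Pi_n)_{n \geq 0}$ and, on $\cG_N \otimes \sigma(\Pi_0,\ldots,\Pi_N)$, put the joint law
$$d\tilde{\bQ}(\omega,\Pi) \;:=\; \exp\Bigl(\beta\sum_{i=1}^N \omega_{i,\Pi_i} - N\lambda(\beta)\Bigr)\, d\P(\omega)\, d\bP(\Pi),$$
which is consistent in $N$ and extends to $\cG_\infty \otimes \sigma(\Pi)$. Its $\omega$-marginal is the size-biased measure $\tilde\P$ with density $W_N^{\beta,\omega}$ with respect to $\P$; its $\Pi$-marginal is still $\bP$ (the exponential factor has $\P$-mean one for each fixed path); and conditionally on $\Pi$, the spine variables $\{\omega_{i,\Pi_i}\}_{i \geq 1}$ are i.i.d.\ with the tilted law $e^{\beta x - \lambda(\beta)}d\P(x)$ of mean $\lambda'(\beta)$, while off-spine environment variables retain their original distribution. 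This decomposition works cleanly because the time coordinate already makes the pairs $(i,\Pi_i)$ pairwise distinct, so the spine tilt factorizes over genuinely independent $\omega$-coordinates.

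Next, keeping only the single spine contribution in the partition function sum gives the $\tilde{\bQ}$-almost sure lower bound
$$W_N^{\beta,\omega} \;\geq\; \prod_{i=1}^N K(\Pi_i - \Pi_{i-1}) \cdot \exp\Bigl(\beta\sum_{i=1}^N \omega_{i,\Pi_i} - N\lambda(\beta)\Bigr).$$
Taking $\tfrac{1}{N}\log$ and applying the law of large numbers under $\tilde{\bQ}$ to each term---the i.i.d.\ increments $\log K(\Pi_i - \Pi_{i-1})$ have mean $-H(K):= -\sum_{n\in\bbZ} K(n)\log(1/K(n))$, finite by hypothesis \eqref{entropy}, and the i.i.d.\ tilted spine variables $\omega_{i,\Pi_i}$ have mean $\lambda'(\beta)$---yields
$$\liminf_{N\to\infty} \tfrac{1}{N}\log W_N^{\beta,\omega} \;\geq\; \bigl(\beta \lambda'(\beta) - \lambda(\beta)\bigr) - H(K) \;>\; 0 \qquad \tilde{\bQ}\text{-a.s.}$$
by the hypothesis. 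In particular $1/W_N \to 0$ $\tilde\P$-almost surely.

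To close the argument I would invoke the classical duality between the nonnegative $\P$-martingale $W_N$ and the nonnegative $\tilde\P$-martingale $1/W_N$ (both of mean one): an $\tilde\P$-a.s.\ limit of zero for $1/W_N$ prevents its uniform integrability, equivalently rules out $\P$-uniform integrability of $W_N$, hence $\E[W_\infty^{\beta,\omega}] < 1$; combined with the zero-one law \eqref{dichotomy} this forces $W_\infty^{\beta,\omega} = 0$ $\P$-a.s., which is the asserted strong disorder. Implication \eqref{implik} then follows at once: if $\beta\lambda'(\beta) - \lambda(\beta) \to \infty$ as $\beta \to \infty$ and $H(K) < \infty$, any $\beta$ large enough to satisfy \eqref{entropy} lies in the strong disorder phase, so $\beta_c < \infty$. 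The only nontrivial conceptual step is setting up the spine decomposition and checking that both laws of large numbers apply---everything else is standard bookkeeping around size-biased nonnegative martingales.
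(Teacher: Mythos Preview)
Your proposal is correct and follows essentially the same route as the paper: both set up the size-biased (spine) measure, lower bound $W_N$ by the single spine-path contribution $\prod_i K(X_i)\exp(\beta\tilde\omega_{i,S_i}-\lambda(\beta))$, and apply the law of large numbers to the i.i.d.\ summands $\log K(X_i)+\beta\tilde\omega_{i,S_i}-\lambda(\beta)$ to obtain $W_N\to\infty$ under the biased law. The only cosmetic difference is in the closing step---the paper quotes a lemma from \cite{oriented} giving the equivalence ``$W_N\to 0$ $\P$-a.s.\ $\Leftrightarrow$ $W_N\to\infty$ in $\tilde\P_N$-probability'', whereas you phrase it via the $W_N\leftrightarrow 1/W_N$ martingale duality; these are the same fact.
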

Note that the condition \eqref{entropy} appears in \cite[Proposition 5.1]{comets long jump} (which studies the case of polymer based on $\alpha$ stable walks) as a sufficient condition to have very strong disorder  ($p(\beta)<\infty$). However here very strong disorder cannot hold in our case (since it would contradict Theorem \eqref{T1}) and the criterion \eqref{entropy} emerges from a proof which is of a different nature than the (fractional moment based) one in \cite[Proposition 5.1]{comets long jump}. 

Setting $s = \mathrm{ess\ sup}\{\omega \}$, we have that $\log \frac{1}{\P\left[\eta = s\right]} > \sum_{ n\in  \bbZ} K(n) \log \frac{1}{K(n)}$ implies that $\b_c < \infty$ as 
\begin{equation}
\lim_{\b \to \infty} \b \l'(\b) - \l(\b)= \log \frac{1}{\P\left[\eta = s\right]}.
\end{equation}
This known property of the exponential moments is proven in the Appendix for completeness. (Lemma \ref{sup}).
The assumption $\lim_{\b \to \infty}  \b \l'(\b) - \l(\b)=\infty$ is equivalent to say that $\omega$ is either unbounded or almost surely does not attain its essential supremum.


{ {
		We note that the condition  $\lim_{\b \to \infty}  \b \l'(\b) - \l(\b) = \infty$ is necessary to have \eqref{implik}}.
	To illustrate our point let us consider the 
	case of the Bernoulli environment with parameter $p$.{ Then}
	there is weak disorder for all $\b$, if 
	$p$ is sufficiently close to one. More specifically, as shown in \cite{comets long jump}, a sufficient condition for which the sequence of polymer measures $W_N^{\b,\w}$ is uniformly bounded in $\mathcal{L}^2$ for all $\beta$ (which implies weak disorder) is that $$p > \bP\otimes\bP' \left[\exists n\geq 1: S_n = S'_n \right], $$ where $S, S'$ are two independent walks.  }

Assuming that the environment is unbounded, Theorem 1.2 permits to conclude that if for some  $\a < -1$, 
\begin{equation}\label{112}
K(n)  \leq \frac{(\log \log n)^\alpha}{n (\log n)^2},
\end{equation}
for all $n$ sufficiently large, the polymer presents a strong disorder phase. { More importantly it provides an example of a directed polymer model for which the two critical points do not coincide ($\b_c < 
	\bar \b_c $). To our knowledge, the existence of such a setup was not predicted in the literature, and while it is not invalidating the conjecture concerning the nearest neighbor model, it sheds a new light on it.
}

In opposition, in the next theorem, we show that under some extra assumptions, if $\alpha > 1$ in \eqref{112}, then there is no strong disorder phase. 
\begin{theorem} \label{T3}
	Under the following conditions on the law of the increments:
	\begin{itemize}
		\item[(a)]  $K(\cdot)$ \text{ is unimodal and symmetric around 0,}
		\item[(b)] For some  $\a < -1$, \begin{equation}
		K(n)  \geq \frac{(\log \log n)^\alpha}{n (\log n)^2},
		\end{equation}
		for all $n$ sufficiently large,
		\item [(c)] and
		\begin{equation}
		\frac{\bP\left[X_1 \in (s_n, 2n s_n) \right]}{\bP\left[X_1 \geq s_n \right]} \leq \frac{1}{n^\g},
		\end{equation}
		where $\g > \dfrac{1}{2}$ and
		\begin{equation}
		s_n := \min \left\lbrace s  \in \N: \bP\left[X_1 \geq s \right] \leq \frac{\left(\log n \right)^2 }{n}\right\rbrace, 
		\end{equation}
		for all $n$ sufficiently large,
	\end{itemize} 
	then, $\b_c = \infty$.
\end{theorem}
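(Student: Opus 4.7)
The plan is to prove weak disorder, $W_\infty^{\beta,\omega}>0$ $\P$-a.s., via a uniform second-moment bound $\sup_N \bE[(W_N^{\beta,\omega})^2]<\infty$, which yields $\b<\b_c$ through the $0$-$1$ dichotomy \eqref{dichotomy}. A standard computation using independence of the environment gives
\begin{equation*}
\bE\bigl[(W_N^{\beta,\omega})^2\bigr]=\bE\otimes\bE'\bigl[\exp(\l_2(\beta)\,L_N)\bigr],\qquad L_N:=\sum_{n=1}^{N}\ind{S_n=S'_n},
\end{equation*}
where $S,S'$ are two independent copies of the walk under $\bP,\bP'$ and $\l_2(\beta):=\log\E[e^{2\beta\w}]-2\l(\beta)\ge 0$. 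In the bounded environment regime motivating the theorem (the Bernoulli example and beyond), $\sup_\beta\l_2(\beta)<\infty$, so uniform $L^2$ control reduces to an exponential moment bound $\bE\otimes\bE'[e^{\lambda L_\infty}]<\infty$ for one $\lambda$ above $\sup_\beta\l_2(\beta)$.

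The core of the argument would be a sharp bound on the meeting probability $q:=\bP\otimes\bP'[\exists n\ge 1:S_n=S'_n]$ of the difference walk $D_n:=S_n-S'_n$ (whose step law is $K\ast K$). I would exploit a single-big-jump principle implied by (c): by definition of $s_n$, the walk $S$ makes on the order of $(\log n)^2$ jumps of size $\ge s_n$ during $[0,n]$, and (c) upgrades each such jump to size $\ge 2n s_n$ with conditional probability at least $1-n^{-\g}$, $\g>1/2$. Hence one ``giant'' jump dominates the trajectory, placing $|S_n|$ on a scale $\gg n s_n$. Unimodality and symmetry (a) then transfer the smallness of the density of $S_n$ from typical points down to $0$, while the lower bound on $K$ in (b) ensures that $K\ast K$ retains the heavy-tail structure needed at every scale for the convolution estimate. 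This should give decay of $\bP[D_n=0]$ strong enough that $\sum_n \bP[D_n=0]$, and consequently $q$, can be made arbitrarily small.

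Once $q$ is sufficiently small, the strong Markov property of $D$ at visits to $0$ yields the geometric tail $\bP\otimes\bP'[L_\infty\ge k]=q^k$, hence $\bE\otimes\bE'[e^{\lambda L_\infty}]=(1-q)/(1-qe^{\lambda})<\infty$ for all $\lambda<\log(1/q)$. This covers $\sup_\beta\l_2(\beta)$ and closes the $L^2$ argument, yielding $\b_c=\infty$. The main obstacle I anticipate is the quantitative estimate on $\bP[D_n=0]$: the threshold $\g>1/2$ in (c) strongly hints at a second-moment computation counting coincident large-jump events between $S$ and $S'$, and balancing the few giant jumps against the many small steps --- while retaining enough dispersion from (a)--(b) for the convolution $K\ast K$ --- is the delicate heart of the heavy-tailed analysis.
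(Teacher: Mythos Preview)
Your $L^2$ approach has a genuine gap that prevents it from reaching $\b_c=\infty$. The return probability $q:=\bP\otimes\bP'[\exists\, n\ge 1: D_n=0]$ is a \emph{fixed} number in $(0,1)$ determined solely by the law of the walk; it cannot ``be made arbitrarily small''. Since $L_\infty$ is geometric with parameter $q$, your identity $\bE\otimes\bE'[e^{\lambda L_\infty}]=(1-q)/(1-qe^{\lambda})$ holds only for $\lambda<\log(1/q)$ and diverges beyond that threshold. Consequently the second moment $\E[(W_N)^2]=\bE\otimes\bE'[e^{\l_2(\beta)L_N}]$ stays bounded only when $\l_2(\beta)<\log(1/q)$, which yields merely $\b_c>0$. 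The theorem, however, is stated for arbitrary environments --- not just bounded ones --- and for unbounded $\omega$ one has $\l_2(\beta)\to\infty$ as $\beta\to\infty$, so the $L^2$ route necessarily fails for large $\beta$.

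The paper circumvents this by a strictly finer argument than $L^2$-boundedness: it proves \emph{uniform integrability} of $W_N$ via the size-biased (spine) representation, fixing the spine path $S$ and bounding $\bE'\bigl[F(\beta)^{|S\cap S'|}\bigr]$ (with $F(\beta)=e^{\l_2(\beta)}$) for $\bP$-typical $S$ only. The heart of the matter, driven by conditions (b) and (c), is the lemma that $|S_n|>K^n$ eventually $\bP$-a.s.\ for \emph{every} constant $K>0$; together with the unimodality bound $\bP'[S'_n=S_n]\le 1/|S_n|$ coming from (a), this gives super-exponential decay of the \emph{quenched} meeting probability along the fixed spine, which beats $F(\beta)^n$ no matter how large $\beta$ is. Averaging over $S$ --- as your second-moment computation does --- destroys this pathwise super-exponential decay: the annealed intersection count $L_\infty$ retains only its geometric tail, and that is precisely why the $L^2$ threshold saturates at the finite value $\log(1/q)$.
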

Condition (c) might seem artificial at first sight but it is satisfied by most distribution with sufficiently regular tails, as $\bP\left[X_1 \geq n \right] = \frac{L(\log n)}{(\log n)^\a}$ where $\a \leq 1$ or $\bP\left[X_1 \geq n \right] = \frac{L(\log \log n)}{(\log \log n)^\b}$ where $\b > 0$ and $L$ a slowly varying function.

%
%
%

{
	
	\subsection{Conjecture and future {research directions}}\label{higherdim}
	

	At the present moment we are not able to answer whether a strong disorder phase  exists if $\a \in [-1,1]$ and $K(n) \asymp \frac{c (\log \log n)^\a}{n (\log n)^2}$ although we believe that the condition \eqref{entropy} on the entropy might be necessary to the existence of the strong disorder phase.{ Let us make this point more precise}. 
	
	\begin{con}
		Assuming that the environment is unbounded from above, we have {the  following equivalence
			\begin{equation}
			\b_c < \infty  \quad   \Leftrightarrow \quad \sum_{n\geq 1} K(n) \log \frac{1}{K(n)} < \infty.
			\end{equation}}
	\end{con}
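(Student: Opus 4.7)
The backward implication $(\Leftarrow)$ is already essentially contained in Theorem \ref{T2}. Assuming $\w$ is unbounded from above, Lemma \ref{sup} gives $\lim_{\b \to \infty}\b \l'(\b)-\l(\b) = +\infty$; hence if $H:=\sum_{n\geq 1}K(n)\log(1/K(n))<\infty$, then for $\b$ large enough the entropy condition \eqref{entropy} is satisfied, Theorem \ref{T2} forces $W_\infty^{\b,\w}=0$, and so $\b_c<\infty$.

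The serious direction is the converse, equivalently $H=\infty \Rightarrow \b_c=\infty$. The plan is to extend the coarse-graining argument underlying Theorem \ref{T3} in such a way as to dispense with the regularity hypotheses (a)--(c) and retain only the global hypothesis on $H$. Fix $\b>0$; to produce weak disorder at $\b$ it suffices to establish uniform integrability of $(W_N^{\b,\w})_N$, and I would aim at this through a truncated second moment bound. Choose a threshold $M=M(\b)$, write $\w = \w\mathbf{1}_{\{\w\leq M\}}+\w\mathbf{1}_{\{\w>M\}}$, and decompose $Z_N^{\b,\w}$ into the contribution of paths that avoid every site $(n,z)$ where $\w_{n,z}>M$, plus the remainder. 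On the first piece the environment is bounded, so $\l^{\leq M}(2\b)-2\l^{\leq M}(\b)$ stays finite, and the standard second moment computation reduces matters to a finite exponential moment of the intersection local time of two independent copies of the walk, which the very heavy tails of $K$ should make controllable. On the second piece one needs a first-moment / union-bound estimate ensuring that paths avoid the sparse random set of atypically large disorder sites with probability tending to one as $M\to\infty$.

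The main obstacle is exactly in turning the qualitative hypothesis $H=\infty$ into a quantitative statement, either about the decay of the return probabilities $\bP[\tilde S_n=0]$ of the symmetrized walk $\tilde S:=S-S'$, or about the probability that $S$ avoids prescribed sparse subsets of $\Z$. Slowly varying profiles $L$ compatible with $H=\infty$ may oscillate arbitrarily, so pointwise tail control as in hypotheses (a)--(c) of Theorem \ref{T3} is unavailable. A more robust route would be an intrinsic entropic inequality tying $H$ directly to $\bP$-probabilities of thin sets, perhaps in the same variational spirit as the proof of Theorem \ref{T1}. This is the step I expect to be genuinely hard, and presumably explains why the statement is presented only as a conjecture.
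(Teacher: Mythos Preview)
The statement you are addressing is a \emph{conjecture} in the paper, not a theorem; the paper offers no proof of it. There is therefore no ``paper's own proof'' to compare against.

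Your treatment of the backward implication $(\Leftarrow)$ is correct and matches exactly what the paper records immediately after Theorem~\ref{T2}: unboundedness of $\omega$ gives $\lim_{\beta\to\infty}\bigl(\beta\lambda'(\beta)-\lambda(\beta)\bigr)=\infty$ by Lemma~\ref{sup}, so finite entropy forces \eqref{entropy} for large $\beta$ and hence $\beta_c<\infty$.

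For the forward implication your proposal is not a proof but an outline of a possible strategy, and you explicitly acknowledge this. That is the right assessment: the paper itself only establishes $\beta_c=\infty$ under the additional regularity hypotheses (a)--(c) of Theorem~\ref{T3}, and the gap between those hypotheses and the bare condition $H=\infty$ is precisely the open content of the conjecture. Your sketch (truncated second moment plus avoidance of large-disorder sites) is plausible in spirit, but the step you flag as ``genuinely hard''---turning $H=\infty$ into quantitative control on intersection local times or avoidance probabilities without any monotonicity or regularity of $L$---is indeed the entire difficulty. Note in particular that the proof of Theorem~\ref{T3} does not go through a second-moment bound on $W_N$ but through tightness under the size-biased measure, using that $|S_n|$ grows super-exponentially almost surely (Lemma~\ref{l2}); that pathwise growth is exactly what hypotheses (b) and (c) buy and what a bare entropy condition does not obviously provide. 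So your proposed route diverges from the method the paper uses even in the special case it does handle, and neither route is currently known to close the gap.
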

	
	\subsection{Acknowledgment}
	The author is very grateful to his PhD advisor Hubert Lacoin for suggesting this problem and for very fruitful discussions during his PhD program. 	
	
	%
	%
	
	\section{Lower bound for the free energy}
	\textit{Idea of the proof}. 
	
	{As we said before, our proof shares some ideas with \cite{alexzyg}. Specifically, since $\bP \left[ X_1 \geq n\right] $ is a slowly varying function of $n$, the longest of the first $m$ excursions typically has length greater than any power of $m$. This enables the polymer to travel further distances, avoiding some regions of insufficiently unfavorable values at low cost. With this in mind, we partition the environment into rectangles of size $N \times 2N^2$, where $N$ is a scaling factor and restrict attention to the ones whose higher values contributes more to the partition function. Roughly speaking, the partition function, when restricted to a \textit{good} rectangle, has a value higher than some appropriate threshold. Further we will lower bound the partition function by considering paths that only travel through these good rectangles. In Lemma \ref{lemma0} we will lower bound the probability of a path to stay inside a rectangle and in Lemma \ref{lemma1}, we control the cost of jumping to a good rectangle. In the proof we make an energy-entropy balancing of the paths that travel only through good rectangles. }
	
	\begin{proof} [Proof of Theorem 1.1]
		Fix  $\eps >0$ arbitrarily small and let $N = N(\b, \eps) \in \N$ be a scaling factor whose value is defined later. Consider the following collection of disjoint rectangles $\cup_{(i,j) \in \Z^2} R_{i,j} = \Z^2 $, each one of size $N \times 2N^2$, defined as
		\begin{equation}
		R_{i,j} := \left\lbrace(x,y) \in \Z^2: iN + 1\leq x \leq (i+1)N, (2j-1)N^2 \leq y < (2j+1)N^2 \right\rbrace. 
		\end{equation}
		In order to lower bound the free energy, we consider only paths that visit rectangles which contribute the most to the partition function. Consider the following restricted version of the normalized partition function to the rectangle $R_{i,j}$:
		\begin{equation}
		\tilde{W}_N(i,j) := \bE^{2jN^2} \left[ \exp \left( \sum_{k= 0}^{N-1}  \b \w_{iN+ k+1,2jN^2 + S_k} - \l(\b) \right) \Bigg| S \in \mathcal{A}_N \right],
		\end{equation} 
		where we define $\mathcal{A}_N$ as the event,
		\begin{equation} \label{A}
		\mathcal{A}_N := \left\lbrace \left(S_k \right)_{k=0}^N:S_{N-1} = S_0,  | S_k - S_0 | < N^2 \text{ for } 0\leq k <N  \right\rbrace. 
		\end{equation}
		Paths considered in the expectation above start at $(iN+1, 2jN^2)$ and remain inside the rectangle until ending up at the vertex $((i+1)N, 2jN^2)$. Notice that by the i.i.d. structure of the environment, $\left\lbrace\tilde{W}_N(i,j): (i,j) \in \Z^2 \right\rbrace$ is an i.i.d. collection of random variables with
		\begin{equation}
		\E \left[\tilde{W}_N(i,j) \right] = 1.
		\end{equation} 
		Depending on the environment's realization, we say that a rectangle $R_{i,j}$ is \textit{$\eta-$\text{good}} when 
		\begin{equation}
		\tilde{W}_N(i,j) \geq \eta,
		\end{equation}
		for some constant $\eta >0$. Let $p_\eta$ be the probability of a rectangle to be $\eta-$good. Given a realization of the environment, let us define the random sequence $\left\lbrace J_{-1}, J_{0}, J_{1},...\right\rbrace $ inductively: Let $J_{-1}=0$ and for $i \geq 0$, 
		\begin{align}
		J_i = \min \left\lbrace j> J_{i-1}: R_{i,j} \text{ is $\eta$-good}\right\rbrace.  
		\end{align}
		We lower bound the partition function $W_{Nm}^{\b, \w}$ by considering trajectories that only visit good rectangles. Specifically, let us consider the trajectories $\left(S_k\right)_{k=0}^{Nm}$ that belong to $\Xi_{Nm}$ where
		\begin{equation}
		\Xi_{Nm} := \left\lbrace\left(S_k\right)_{k=0}^{Nm}: S_{iN+1}=S_{(i+1)N}=2 J_i N^2, |S_{iN+k}- S_{iN}|< N^2, \text{ for } 0\leq i < m, 1\leq k < N  \right\rbrace. 
		\end{equation}
		In other words, when considering the graph of these paths, in $\Z^2$, they do the following
		\begin{itemize}
			\item Starting from (0,0), they jump to the site $(1,2J_0N^2)$ and remain inside $R_{0,J_0}$ until the ending up at the site $(N,2J_0N^2)$.
			\item Inductively for $1 \leq i < m$, after visiting the last site of $R_{i-1,J_{i-1}}$, they jump to the site $(iN+1, 2J_i N^2)$ and remain inside $R_{i,J_i}$ until ending up at the site $((i+1)N,2J_iN^2)$.
		\end{itemize}
		\begin{figure}
			\includegraphics[scale=0.5]{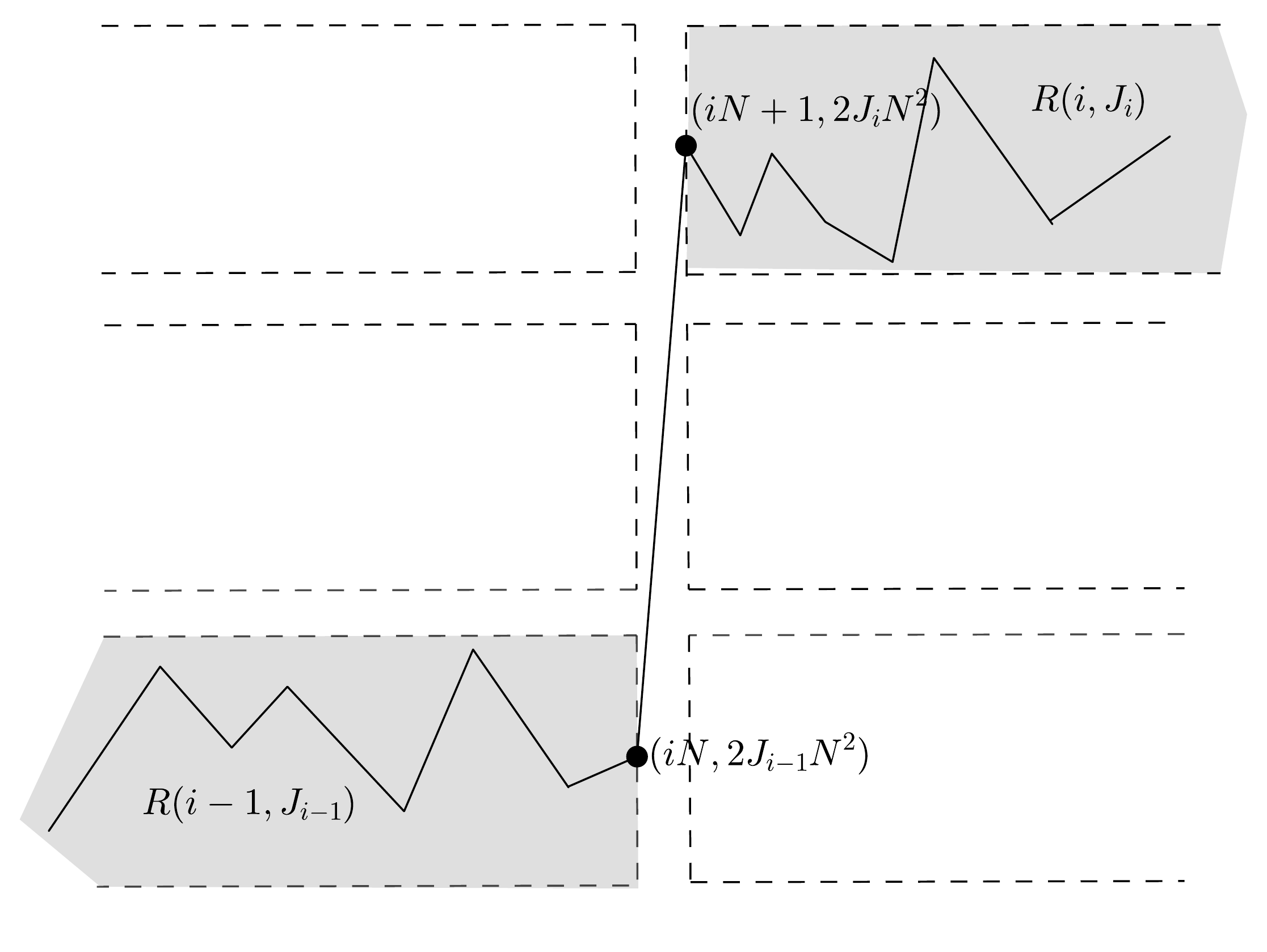}
			\caption{A path that belongs to $\Xi_{Nm}$, after visiting the last site of the good rectangle $R(i-1, J_{i-1})$, jumps to the site $(iN, 2J_i N^2)$ of the first good rectangle $R(i,J_i)$ from the next column.} 
			\label{fig1}
		\end{figure}
		Let $W_{Nm}^{\b, \w}\left(\Xi_{Nm}\right)$ be the partition function restricted to the trajectories that belong to $\Xi_{Nm}$. By the Markov Property
		\begin{align}
		W_{Nm}^{\b, \w}\left( \Xi\right) = \prod_{i=0}^{m-1} \bP \left[X_1 = 2(J_i-J_{i-1}) N^2\right] \tilde{W}(i, J_i) \bP \left[\mathcal{A}_N\right].
		\end{align}
		We then have that
		\begin{equation}
		\begin{split} \label{5}
		\frac{1}{Nm} \log W_{Nm}^{\b, \w} &\geq \frac{\log \eta }{N} + \frac{1}{Nm} \sum_{i=0}^{m-1}  \log K(2(J_i - J_{i-1})N^2) + \frac{\log \bP \left[ \mathcal{A}_N \right]}{N}.
		\end{split}
		\end{equation}
		Letting $m \to \infty$, the left hand side of \eqref{5} converges to the free energy. Notice that since the events 
		\begin{equation}
		\left\lbrace R_{i,j} \text{ is }\eta-\text{good}: (i,j) \in \Z^2 \right\rbrace
		\end{equation}
		are independent, $\left\lbrace J_i - J_{i-1} - 1\right\rbrace_{i \geq 0} $ is an i.i.d. collection of random variables. Therefore, by the Law of Large Numbers,
		\begin{equation}
		\lim_{m\to \infty} \frac{1}{m} \sum_{i=0}^{m-1}  \log K(2(J_i - J_{i-1})N^2) =  \E\left[  \log K(2J_0 N^2)\right] = \E\left[  \log \frac{L(2J_0 N^2)}{2J_0 N^2}\right] . 
		\end{equation}
		Let
		\begin{equation}\label{SVF}
		C_{L,\eps} := \inf \left\lbrace x^\eps L(x): x\geq K \right\rbrace > 0, 
		\end{equation}
		for some $K = K(\eps) > 0 $ sufficiently large. Then, assuming $2N^2 \geq K$ we have, by Jensen's Inequality,
		\begin{equation}
		\begin{split}
		\E\left[  \log \frac{L(2J_0 N^2)}{2J_0 N^2}\right] &\geq \E\left[  \log  \frac{C_{L,\eps}  }{(2J_0 N^2)^{1+\eps}}\right] \geq \log \frac{  C_{L,\eps}  }{(2 N^2)^{1+\eps} } - (1+\eps)\log \E\left[ J_0 \right].
		\end{split}
		\end{equation} 
		As $J_0-1$ is a geometric random variable with parameter $p_\eta$, we have that $\E\left[ J_0 \right] = \frac{1}{p_\eta} + 1$. Then,
		\begin{align} 
		p(\b) &\geq \frac{\log \eta}{N}  +\frac{1}{N} \log \frac{  C_{L,\eps}  }{(2 N^2)^{1+\eps} } - \frac{(1+\eps)}{N} \log \left( \frac{1}{p_{\eta}} + 1\right) + \frac{\log \bP \left[ \mathcal{A}_N\right]}{N}.  \label{final}
		\end{align}
		
		Let us state the following two lemmas, whose proofs are presented at the end of the section. The first one is a straightforward lower bound for $\bP \left[ \mathcal{A}_N\right]$. The second one is more subtle and shows that we can choose a suitable value for $\eta$ such that it compensates the cost $p_\eta$ of the jump. We use these to bound $\frac{\log \bP \left[ \mathcal{A}_N\right]}{N}$ and $\frac{\log \eta}{N} -  \frac{(1+\eps)}{N} \log \left( \frac{1}{p_{\eta}} + 1\right)$ respectively.
		
		\begin{lemma} \label{lemma0}
			With $\mathcal{A}_N$ defined in \eqref{A} we have
			\begin{equation}
			\lim_{N \to \infty} \frac{\log \bP \left[ \mathcal{A}_N\right]}{N} = 0.
			\end{equation}
		\end{lemma}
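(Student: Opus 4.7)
The plan is to produce, for a well-chosen truncation parameter, a family of short-jump trajectories whose probability already captures the desired subexponential decay. Fix an integer $M \ge 1$ and restrict attention, inside $\cA_N$, to trajectories whose $N-1$ increments $X_1,\ldots,X_{N-1}$ all satisfy $|X_i|\le M$. For $N$ large enough that $(N-1)M < N^2$ (e.g.\ $N > M+1$), every such trajectory automatically satisfies $|S_k|\le k M < N^2$ for $0\le k\le N-1$, so the box constraint $|S_k|<N^2$ in the definition of $\cA_N$ comes for free, and only the return condition $S_{N-1}=0$ remains to be imposed. This yields
\[
\bP[\cA_N] \ge q_M^{\,N-1}\, \bP\bigl[\tilde S_{N-1} = 0\bigr],
\]
where $q_M := \bP[|X_1|\le M]$ and $\tilde S$ is the random walk whose step distribution is $K(\cdot)$ conditioned on $[-M,M]$.

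The key observation is that, although $\bP$ itself has no first moment and no local CLT is available for it, the truncated walk $\tilde S$ has bounded, symmetric, integer-valued i.i.d.\ increments with strictly positive finite variance. Because $K(\pm 1)>0$ by the standing assumption that $K(n)>0$ for all $n\neq 0$, the support of the truncated step has span one, so the classical local central limit theorem applies and yields $\bP[\tilde S_{N-1}=0]\sim c_M/\sqrt{N}$ as $N\to\infty$ for some constant $c_M = c_M(M) > 0$. In particular, $\log \bP[\tilde S_{N-1}=0] = O(\log N)$.

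Taking logarithms, dividing by $N$ in the inequality above, and letting $N\to\infty$ then gives
\[
\liminf_{N\to\infty}\frac{\log \bP[\cA_N]}{N}\ \ge\ \log q_M.
\]
Since $\sum_{n\in\Z}K(n)=1$, we have $q_M\to 1$ as $M\to\infty$; letting $M$ tend to infinity produces $\liminf\ge 0$. The complementary bound $\limsup\le 0$ is immediate from $\bP[\cA_N]\le 1$, which closes the argument. The only slightly delicate point is the invocation of the local CLT for the truncated walk, but since that walk has bounded, symmetric, non-degenerate increments of span one, this is a textbook fact and poses no serious obstacle; the truncation idea itself crucially exploits that the box of side $N^2$ is polynomially wider than the per-step scale $M$, so even very restrictive bounds on the increments suffice to keep the trajectory inside the box.
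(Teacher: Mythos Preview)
Your argument is correct and reaches the conclusion, but it is not the route the paper takes. The paper truncates the increments at the \emph{growing} level $N$ rather than at a fixed level $M$: it lower-bounds $\bP[\cA_N]$ by $\bP[|X_1|<N,\ldots,|X_{N-2}|<N,\ S_{N-1}=0]$, then handles the return $S_{N-1}=0$ by bounding the single last jump via the explicit form $K(n)=L(n)/n$, namely $K(-S_{N-2})\ge C_{L,\eps}/N^{2+\eps}$ (this is where the paper uses the standing assumption $K(n)>0$, as remarked right after Theorem~\ref{T1}). This yields $\bP[\cA_N]\ge \tfrac{C_{L,\eps}}{N^{2+\eps}}(1-\bP[X_1\ge N])^{N-2}$ and the result follows by inspection. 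So the paper trades the local CLT for a direct one-step estimate that exploits the slowly varying tail.

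Your approach has the advantage of not using the specific tail hypothesis \eqref{1}; it works for any symmetric walk with $K(n)>0$ on $\Z\setminus\{0\}$. The paper's approach is more elementary in that it avoids the local CLT altogether. One small imprecision in your write-up: merely $K(\pm1)>0$ would only force the truncated step to have span dividing $2$; to guarantee span one you should also use, say, $K(2)>0$ (which you have, since $K(n)>0$ for all $n\ne 0$), so that the support of the truncated step contains two consecutive integers. With $M\ge 2$ this is automatic and your invocation of the local CLT is justified.
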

		\begin{lemma} \label{lemma1}
			There exists $\eta\in [1/2, e^{C_{\beta}N}]$ such that $p_{\eta}\ge \frac{c }{\eta (2+ \log \eta)^2}$, where $c$ and $C_\b$ are constants, the last one depending only on $\b$. 	
			%
			%
		\end{lemma}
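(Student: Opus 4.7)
The plan is a layer-cake-and-second-moment contradiction, based on the identity
\[
1 = \E[\tilde W_N(0,0)] = \int_0^\infty \P[\tilde W_N(0,0) > t]\, dt.
\]
Write $M := e^{C_\beta N}$, with $C_\beta$ to be chosen, and suppose for contradiction that the conclusion fails throughout the stated range, i.e.\ $p_\eta < c\,[\eta(2+\log\eta)^2]^{-1}$ for every $\eta \in [1/2, M]$, where $c$ is a small absolute constant.

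I would split the integral above into the three ranges $[0,1/2]$, $[1/2,M]$, $[M,\infty)$. The first contributes at most $1/2$ trivially. For the middle range the contradiction hypothesis together with the substitution $u = 2 + \log t$ give
\[
\int_{1/2}^M \P[\tilde W_N > t]\, dt \;<\; c \int_{1/2}^M \frac{dt}{t(2+\log t)^2} \;=\; c\left[\frac{1}{2-\log 2} - \frac{1}{2+C_\beta N}\right] \;\le\; \frac{c}{2-\log 2}.
\]

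The crux is the tail range. For this I would establish the second-moment bound $\E[\tilde W_N^2] \le e^{N(\lambda(2\beta) - 2\lambda(\beta))}$, which is finite since $\lambda$ is defined on all of $\bbR$. It follows from conditional Jensen applied inside $\bE[\,\cdot\,|\mathcal A_N]$, Fubini to swap $\E$ and $\bE$, and the observation that the time-coordinates $iN+k+1$, $0 \le k < N$, of the environment variables $\omega_{iN+k+1, 2jN^2 + S_k}$ appearing in $\tilde W_N$ are all distinct, so these variables are independent under $\P$ regardless of the path $S$. Combined with
\[
\int_M^\infty \P[\tilde W_N > t]\, dt \;\le\; \E[\tilde W_N \mathbf{1}_{\tilde W_N > M}] \;\le\; \frac{\E[\tilde W_N^2]}{M},
\]
the choice $C_\beta := \lambda(2\beta) - 2\lambda(\beta) + 1$ makes this piece at most $e^{-N}$, which is $\le 1/4$ for $N$ large enough.

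Summing the three bounds gives a total strictly less than $1$ whenever $c$ is a small enough absolute constant (e.g.\ $c = (2-\log 2)/8$), contradicting the layer-cake identity and establishing the existence of the desired $\eta$. The argument is essentially bookkeeping; the only substantive step is the second-moment bound, whose validity hinges on the distinct-time-coordinates observation — which is precisely the reason $\tilde W_N$ was constructed to traverse the rectangle horizontally (over $N$ time steps) rather than vertically.
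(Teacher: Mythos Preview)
Your proof is correct and is essentially the same argument as the paper's, in continuous rather than dyadic form. The paper writes $\tfrac{1}{2}\le \sum_{n\ge 0} 2^n \P[\tilde W\ge 2^{n-1}]$, compares term-by-term with the convergent series $\sum_n \frac{3/\pi^2}{(n+1)^2}$, picks the first index $n_0$ where the comparison holds and sets $\eta=2^{n_0-1}$, then caps $n_0$ via the second-moment bound $\E[\tilde W^2]\le e^{(\lambda(2\beta)-2\lambda(\beta))N}$; your layer-cake integral and contradiction hypothesis are exactly the continuous analogue of this pigeonhole, with the same second-moment bound controlling the tail range. A minor remark: your Jensen-based derivation of $\E[\tilde W^2]\le e^{N(\lambda(2\beta)-2\lambda(\beta))}$ is a slightly cleaner alternative to the paper's two-replica computation, but yields the identical estimate.
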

		
		Let us finish the proof of the theorem using the lemmas above. As $p_{\eta}\ge \frac{c }{\eta (2+ \log \eta)^2}$,
		\begin{align} 
		p(\b) \geq \frac{\log \eta}{N} &+\frac{1}{N} \log \frac{  C_{L,\eps}  }{(2 N^2)^{1+\eps} }  + \frac{1+\eps}{N}\left(  \log c  - \log \eta  - 2 \log (2 + \log \eta) \right)\\ 
		&- \frac{1+\eps}{N} \log 2 + \frac{\log \bP \left[ \mathcal{A}_N\right]}{N}.
		\end{align}
		Since $\eta\in [1/2, e^{C_{\beta}N}]$ we obtain, 
		\begin{align} 
		p(\b) \geq - \eps C_\b &+ \frac{1}{N}  \log \frac{  C_{L,\eps}  }{(2 N^2)^{1+\eps} } + \frac{1+\eps}{N}\left(  -\log c - 2 \log (2 + C_\b N) \right)\\ 
		&- \frac{1+\eps}{N} \log 2 + \frac{\log \bP \left[ \mathcal{A}_N\right]}{N}.
		\end{align}
		which can be made arbitrarily small by choosing $\eps$ sufficiently small and $N$ sufficiently large.
	\end{proof}

	\begin{proof}[Proof of Lemma 2.1]
		Notice that
		\begin{equation}
		\begin{split}
		\bP \left[ \mathcal{A}_N\right] &\geq \bP \left[|X_1| < N,...,|X_{N-2}| < N, S_{N-1}=0  \right] \\
		&= \bE \left[\1{|X_1| < N}...\1{ X_{N-2} < N} \bE^{S_{N-2}}\left[X_{1}=0 \right] \right] \\
		&\geq  \frac{C_{L,\eps}}{N^{2+\eps}}\bE\left[\1{|X_1| < N}...,\1{|X_{N-2}| < N} \right].
		\end{split}
		\end{equation}	
		In the last inequality we use \eqref{SVF} (if the last jump $X_{N-1}$ is smaller than $K$ its probability can be lower bounded by a positive constant). Finally,
		\begin{equation}
		\bP \left[ \mathcal{A}_N\right] \geq \frac{C_{L,\eps}}{N^{2+\eps}} \left( 1 - \bP\left[X_1 \geq N \right]\right)^{N-2},	
		\end{equation}
		which implies
		\begin{equation}
		\frac{\log \bP \left[ \mathcal{A}_N\right]}{N} \geq \frac{1}{N}\log\left(\frac{ C_{L,\eps}}{N^{2+\eps}} \right) + \frac{N-2}{N} \log \left( 1 - \bP\left[X_1 \geq N \right]\right), 
		\end{equation}
		which converges to $0$ as $N \to \infty$.
	\end{proof}
	
	\begin{proof}[Proof of Lemma 2.2]
		Let us denote by $\tilde{W}$ a random variable that has the same distribution as $\tilde{W}_N(i,j)$. Notice that, as $\E \tilde{W} = 1$, 
		\begin{equation}\label{pgood0}
		\frac{1}{2} \E \tilde{W} \leq \E\left[ \tilde{W} \1{\tilde{W} > \frac{1}{2} \E \tilde{W}} \right] \leq \sum_{n=0}^{\infty} \E\left[ 2^n  \1{2^{n-1} \leq \tilde{W} }  \right].
		\end{equation}
		Using the fact that $\sum_{n=0}^{\infty} \frac{1}{(n+1)^2} = \frac{\pi^2}{6}$, we obtain
		\begin{equation}
		\frac{3}{\pi^2} \sum_{n=0}^{\infty} \frac{1}{(n+1)^2} \leq  \sum_{n=0}^{\infty} \E\left[ 2^n  \1{2^{n-1} \leq \tilde{W} }  \right].
		\end{equation}
		We now may define $n_0 \geq 0$, as the smallest integer such that 
		\begin{equation} \label{up}
		\frac{3}{\pi^2}\frac{1}{(n_0+1)^2} \leq 2^{n_0} \P\left[ 2^{n_0-1} \leq \tilde{W} \right]. 
		\end{equation}
		Letting $\eta=2^{n_0-1}$ this implies that,
		\begin{equation}\label{pgoodd}
		p_{\eta} \geq \frac{3}{\pi^2}\frac{ 2^{-n_0}}{(n_0+1)^2}.  
		\end{equation}
		On the other hand, by computing the second moment of $ \tilde{W}$ and considering $S'$ as and independent copy of  $S$ we obtain
		\begin{equation}
		\begin{split}
		\E\left[ \tilde{W}^2 \right] &= \E\left[ \bE^{\otimes 2} \left[\exp \left( \sum_{k= 0}^{N-1}  \b \w_{i,S_i} + \b \w_{i,S'_i} - 2\l(\b)  \right)  \Bigg| S, S' \in \mathcal{A}_N  \right] \right] \\ 
		&=  \bE^{\otimes 2}\left[  \exp \left( \left(\l(2\b)- 2\l(\b)\right)  \sum_{k= 0}^{N-1} \1{S_i = {S'}_i}  \right) \1{S, S' \in \mathcal{A}_N } \right] \bP \left[ \mathcal{A}_N\right]^2 \\
		&\leq \exp\left(\left(\l(2\b)- 2\l(\b)\right) N\right),
		\end{split}
		\end{equation} 
		We lower bound the expectation above as
		\begin{equation}
		\begin{split} \label{down}
		\E\left[ \tilde{W}^2 \right] \geq  2^{2(n_0-1)}  \P\left[ 2^{n_0-1} \leq \tilde{W} \right].  
		\end{split}
		\end{equation} 
		By Equations \eqref{up} and \eqref{down} we get
		\begin{equation}
		\frac{3}{\pi^2}\frac{1}{(n_0+1)^2} \leq 2^{n_0} \exp(\left(\l(2\b)- 2\l(\b)\right) N) 2^{-2(n_0-1)},
		\end{equation}
		which implies
		\begin{equation}
		\frac{2^{n_0}}{(n_0+1)^2} \leq \frac{4 \pi^2}{3} \exp(\left(\l(2\b)- 2\l(\b)\right) N).
		\end{equation}
		Let $N_0  \in \N$ be such that if $n > N_0$ then $(3/2)^{n} \leq \frac{2^{n}}{(n+1)^2}$. Then either $n_0 \leq N_0 \leq N$ by taking $N$ sufficiently large, or
		\begin{equation}\label{frac}
		\frac{n_0}{N}\log (3/2) \leq \l(2\b)- 2\l(\b) + \frac{\log (4 \pi^2 /3)}{N},
		\end{equation}
		which finishes the proof of the lemma.
	\end{proof}
	
	\section{Strong Disorder for small temperature}
	In this Section we show that under the assumptions of Theorem \ref{T2}, the polymer measure has a strong disorder phase, for large enough $\b$. Along with Theorem
	\ref{T1}, this allows us to construct a family of polymer measures in which there is a strong disorder phase with $p(\b) = 0$.
	\subsection{Size Biasing}
	Notice that since
	\begin{equation}
	\E\left[W^{\b, \w}_N \right] = 1,
	\end{equation}
	there exists a well defined probability measure $\tilde{\P}_N^{\b}$, called the \textbf{size biasing} measure, absolutely continuous with respect to $\P$ such that
	\begin{equation}
	\frac{\text{d}\tilde{\P}_N^{\b}}{\text{d} \P} = W^{\b, \w}_N.
	\end{equation}
	The following result states that a sequence of positive, mean one random variables, converges to $0$, if and only if it converges to infinity, in probability, under the size biazed distribution. This gives us a condition for which strong disorder holds, in terms of the size biasing measure.
	\begin{lemma} \label{3}
		Let $\left\lbrace W_1, W_2,...\right\rbrace $ be a sequence of positive random variables with $\E\left[X_N \right]= 1$ for all $N$. The following are equivalent:
		\begin{itemize}
			\item \begin{equation}
			\lim_{N \to \infty} W_N = 0,
			\end{equation}	
			$\P-$a.s. 
			\item For all $L > 0$,
			\begin{equation}
			\lim_{N \to \infty} \tilde{\P}_N \left[W_N \geq L \right] = 1.
			\end{equation}
			where the size biased measure $\tilde{\P}$ is defined by its Radon-Nikodym derivative with respect to $\P$,
			\begin{equation}
			\frac{\text{d}\tilde{\P}_N}{\text{d} \P} = W_N.
			\end{equation}
		\end{itemize}
	\end{lemma}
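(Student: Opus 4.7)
My plan is to treat the two implications separately, with the forward direction reducing immediately to dominated convergence and the converse requiring a two-step argument that ends by invoking the martingale structure that $W_N^{\b,\w}$ carries in the applications of this lemma.

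For the forward direction, assume $W_N \to 0$ $\P$-a.s. The key observation is that the size-biasing definition gives the identity $\tilde\P_N[W_N < L] = \E[W_N \1{W_N < L}]$. The integrand is bounded above by $L$ and tends to $0$ $\P$-a.s., so the dominated convergence theorem yields $\tilde\P_N[W_N<L]\to 0$, equivalently $\tilde\P_N[W_N\ge L]\to 1$. This step is routine.

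For the converse, assume $\tilde\P_N[W_N\ge L]\to 1$ for every $L>0$, equivalently $\E[W_N \1{W_N<L}]\to 0$. First I would establish convergence in probability. Fix $\eps, L>0$ and decompose
\begin{equation*}
\P[W_N>\eps] \;=\; \P[\eps<W_N<L] \;+\; \P[W_N\ge L].
\end{equation*}
By Markov's inequality, $\P[\eps<W_N<L]\le \eps^{-1}\E[W_N\1{W_N<L}]$, which tends to zero by hypothesis, while $\P[W_N\ge L]\le 1/L$ by Markov applied to $\E[W_N]=1$. Sending $N\to\infty$ then $L\to\infty$ shows $W_N\to 0$ in $\P$-probability.

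The main obstacle is upgrading this convergence in probability to almost-sure convergence, which in full generality is false (a simple counterexample: take independent $W_N$ equal to $N$ with probability $1/N$ and $0$ otherwise). The escape route is the ambient structure from Section 1: in the applications, $W_N = W_N^{\b,\w}$ is a nonnegative martingale of mean $1$ with respect to the filtration $\{\mathcal{G}_N\}$, so Doob's martingale convergence theorem guarantees that $W_N\to W_\infty$ exists $\P$-a.s. Convergence in probability to $0$, combined with almost-sure convergence to $W_\infty$, forces $W_\infty=0$ $\P$-a.s., completing the proof. I would either state this martingale hypothesis explicitly at the outset of the proof or invoke it directly when closing the converse, since the lemma is only used for the polymer partition function where this structure is automatic.
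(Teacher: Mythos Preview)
The paper does not give a self-contained proof of this lemma; it simply refers the reader to \cite[Proposition 4.2]{oriented}. Your argument is therefore the only detailed proof on the table, and both directions are handled correctly.

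Your observation that the lemma, as literally stated for an arbitrary mean-one sequence, is false is accurate, and the counterexample you give is valid: with $W_N$ independent, equal to $N$ with probability $1/N$ and $0$ otherwise, one has $\tilde\P_N[W_N\ge L]=\E[W_N\1{W_N\ge L}]=1$ for all $N\ge L$, yet $\limsup_N W_N=\infty$ almost surely by the second Borel--Cantelli lemma. The martingale hypothesis you invoke to upgrade convergence in probability to almost-sure convergence is exactly the structure present in the only application the paper makes of the lemma (the polymer partition function $W_N^{\b,\w}$, a nonnegative $\{\cG_N\}$-martingale), and is the natural reading of the cited reference as well. So your proof is correct for the intended use and, in fact, sharper than the paper's formulation about what hypotheses are genuinely required; stating the martingale assumption explicitly at the outset, as you suggest, would be the cleanest fix.
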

	\begin{proof}
		See \cite[Proposition 4.2]{oriented}.
	\end{proof}
	As in \cite{Lacoin}, we give the following description of the size-biasing measure. Consider an i.i.d. set of random variables $\tilde{\w} = \{\tilde{\w}_{n,z}: n \in \N,z \in \Z\}$ from a probability space $ (\tilde{\Lambda}, \tilde{\cF}, \tilde{\P})$ of distribution given by:
	\begin{equation} \label{quenched}
	\tilde{\P}\left( \tilde{\w}_{1,0} \in \cdot \right) = \E\left[ e^{\b \w_{1,0} - \l(\b)} \1{\w_{1,0} \in \cdot} \right].
	\end{equation}
	For a fixed path $S$, and a given realization of the environments $\{\w_{n,z}: n \in \N,z \in \Z\}$ and $\{\tilde{\w}_{n,z}: n \in \N,z \in \Z\}$, we define $\{\hat{\w}_{n,z}^S: n \in \N,z \in \Z\}$ as: 
	\begin{equation} \label{tilted}
	\hat{\w}_{i,z}^S := \w_{i,z}\1{z \not= S_i} + \tilde{\w}_{i,z}\1{z = S_i}.
	\end{equation}
	One can see that for any bounded continuous function $F: \R \to \R$,
	\begin{equation}
	\tilde{\E}_N^{\b}\left[F(\w) \right] = \bE \otimes \E \otimes \tilde{\E} \left[F(\hat{\w}^S)\right],  
	\end{equation}
	as the change of measure induced by the density,
	\begin{equation}\label{func}
	\frac{\text{d}\tilde{\P}_N^{\b,S}}{\text{d} \P} = \exp\left(\sum_{k= 1}^N \b \w_{i, S_{i}} - \l(\b) \right), 
	\end{equation}
	retains the independence of the elements of the environment but tilts the distribution of the ones that belong to the graph of $S$ by a factor of $\exp\left(\b \w - \l(\b) \right) $. This implies that, given Lemma \ref{3}, the following is sufficient to prove Theorem 2.
	\begin{proposition}
		If
		\begin{equation}
		-\sum_{n\geq 1} K(n) \log K(n) < \b \l'(\b) - \l(\b)
		\end{equation}
		for some $\b > 0$ (in particular, $-\sum_{n\geq 1} K(n) \log K(n) < \infty$), then sequence $\left\lbrace W^{\b, \hat{\w}^S}_N \right\rbrace_{N \geq 1} $ converges to infinity $\bP \otimes \P \otimes \tilde{\P}$-a.s.
	\end{proposition}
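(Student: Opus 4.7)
The plan is to show exponential growth of $W^{\b, \hat{\w}^S}_N$ by restricting the partition function to the single trajectory $S$ itself. Recalling that
$$W^{\b, \hat{\w}^S}_N = \bE_{S'}\left[\exp\left(\sum_{i=1}^N (\b \hat{\w}^S_{i, S'_i} - \l(\b))\right)\right]$$
with $S'$ an independent copy of the walk, the key observation via \eqref{tilted} is that on the event $\{S'_i = S_i,\, 1 \le i \le N\}$ one has $\hat{\w}^S_{i, S'_i} = \tilde{\w}_{i, S_i}$. Restricting the $\bE_{S'}$-expectation to this single path yields
$$W^{\b, \hat{\w}^S}_N \geq \left(\prod_{i=1}^N K(X_i)\right) \exp\left(\sum_{i=1}^N (\b \tilde{\w}_{i, S_i} - \l(\b))\right),$$
where $X_i := S_i - S_{i-1}$ are the increments of $S$.

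Next I would take logarithms, divide by $N$, and apply the strong law of large numbers twice. The increments $\{X_i\}$ are i.i.d.\ under $\bP$ with law $K$; the entropy hypothesis ensures $\bE[\log K(X_1)] = \sum_n K(n) \log K(n)$ is finite, so SLLN yields this quantity as the $\bP$-a.s.\ limit of $\frac{1}{N}\sum_i \log K(X_i)$. On the environment side, the construction \eqref{quenched} gives $\tilde{\E}[\tilde{\w}_{1,0}] = \E[\w e^{\b \w - \l(\b)}] = \l'(\b)$; and for each fixed realization of $S$, the $N$ pairs $(i, S_i)$ have pairwise distinct first coordinates, so $\{\tilde{\w}_{i, S_i}\}_{i=1}^N$ is i.i.d.\ under $\tilde{\P}$ with this tilted law. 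A second application of SLLN (conditioning on $S$ and integrating via Fubini) gives
$$\frac{1}{N}\sum_{i=1}^N (\b \tilde{\w}_{i, S_i} - \l(\b)) \to \b \l'(\b) - \l(\b), \qquad \bP \otimes \tilde{\P}\text{-a.s.}$$

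Combining the two limits,
$$\liminf_{N \to \infty} \frac{1}{N}\log W^{\b, \hat{\w}^S}_N \geq \b\l'(\b) - \l(\b) + \sum_n K(n) \log K(n) > 0$$
by the hypothesis, and therefore $W^{\b, \hat{\w}^S}_N \to \infty$ $\bP \otimes \P \otimes \tilde{\P}$-a.s., as required. Morally this is a pure energy-versus-entropy balance: the per-step energy gain $\b \l'(\b) - \l(\b)$ extracted from the tilted environment along $S$ defeats the entropic cost $-\sum_n K(n) \log K(n)$ of forcing the auxiliary walk $S'$ to follow $S$. The original environment $\w$ never appears in this lower bound, so no hypothesis on $\w$ beyond differentiability of $\l$ at $\b$ is needed; the only real technical point is the i.i.d.\ property of $\{\tilde{\w}_{i, S_i}\}_i$ under $\tilde{\P}$ for fixed $S$ (immediate from distinctness of the time-coordinates), which is where one must work on the full product space $\bP \otimes \tilde{\P}$ rather than fibre-by-fibre.
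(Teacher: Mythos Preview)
Your proof is correct and follows essentially the same approach as the paper: restrict the auxiliary expectation to the single path $S'=S$, take logarithms, and invoke the law of large numbers to obtain a strictly positive Lyapunov exponent. The only cosmetic difference is that the paper applies the SLLN once to the combined i.i.d.\ sequence $\{\log K(X_k) + \b\tilde\w_{k,S_k} - \l(\b)\}_{k\ge 1}$ under $\bP\otimes\tilde\P$, whereas you split it into two separate applications; both are equally valid.
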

	\begin{proof}
		Let us write 
		\begin{equation}
		W^{\b, \hat{\w}^S}_N = \bE' \left[ \exp\left(\sum_{k= 1}^N \b \hat{\w}_{k,S'_k}^S - \l(\b) \right)  \right], 
		\end{equation}
		where $\left(\bP', S' \right) $ is an independent copy of $\left(\bP, S \right)$. Then
		\begin{align}
		W^{\b, \hat{\w}^S}_N &\geq \bP'\left[S_1' = S_1,...,S_N' = S_N \right] \exp\left(\sum_{k= 1}^N \b \hat{\w}_{k,S_k}^S - \l(\b) \right) \\
		&= \prod_{k=1}^N K(X_k) \exp\left(\b \tilde{\w}_{k,S_k} - \l(\b)  \right),  
		\end{align}
		Since it suffices to show that $\lim_N \log W^{\b, \hat{\w}^S}_N \to \infty$, we are left with proving that
		\begin{equation}\label{fina}
		\lim_{N \to \infty} \sum_{k=1}^N \left( \log K(X_k) + \b \tilde{\w}_{k,S_k} - \l(\b) \right) \to \infty,
		\end{equation}
		$\bP \otimes \tilde{\P}$-a.s. Notice that, if $ h_\b:= \bE \otimes \tilde{\E} \left[\log K(X_k) + \b \tilde{\w}_{k,S_k} - \l(\b) \right] > 0$, then \eqref{fina} is a consequence of the Law of Large Numbers, applied to the i.i.d. sequence $\left\lbrace \log K(X_k) + \b \tilde{\w}_{k,S_k} - \l(\b) \right\rbrace_{k \geq 1}$ as
		\begin{equation}
		\lim_{N \to \infty} \frac{\sum_{k=1}^N \left( \log K(X_k) + \b \tilde{\w}_{k,S_k} - \l(\b) \right) }{N} = h_\b,
		\end{equation} 
		$\bP \otimes \tilde{\P} $-a.s. This is a direct consequence of the assumption of the proposition as 
		\begin{equation}
		\begin{split}
		\bE \otimes \tilde{\E} &\left[\log K(X_k) + \b \tilde{\w}_{k,S_k} - \l(\b) \right] = 
		\bE \left[\log K(X_k) \right] +  \tilde{\E}\left[\b \tilde{\w} - \l(\b) \right] \\
		&= \sum_{n \in \Z}\left(  K(n) \log K(n) \right) +  \b \l'(\b) - \l(\b).
		\end{split}
		\end{equation}
	\end{proof}

	\section{No strong disorder case}
	In this section, we prove Theorem \ref{T3}. In the proposition below we use the size biased measure description from the previous section to show that the sequence $\{W_N\}$, under the sized biased measure, is tight (which is equivalent to proving that $\{W_N\}$ is uniformly integrable). This proves that weak disorder holds at every temperature, under the conditions on the increments distribution. 
	\begin{proposition}
		Under the conditions of Theorem 1.3, 	
		\begin{equation} \label{6}
		\lim_{N \to \infty} \bP \P \otimes \tilde{\P} \left[W^{\b, \hat{\w}}_N \geq L \right] \not= 1,
		\end{equation}
		for some $L$ sufficiently large.	
	\end{proposition}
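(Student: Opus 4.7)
To prove the proposition I will show that the sequence $\{W_N^{\b,\hat\w^S}\}$ is tight under $\bP \otimes \P \otimes \tilde\P$; by Markov's inequality this follows from a uniform upper bound on $\bE \otimes \E \otimes \tilde\E[W_N^{\b,\hat\w^S}]$. Writing $W_N^{\b,\hat\w^S} = \bE'[\exp(\sum_{k=1}^N \b\hat\w^S_{k,S'_k} - \l(\b))]$ with $S'$ an independent copy of the walk and integrating out $\w, \tilde\w$ factor by factor using that $\E[e^{\b\w - \l(\b)}] = 1$ while $\tilde\E[e^{\b\tilde\w - \l(\b)}] = e^{\l(2\b) - 2\l(\b)}$, the computation collapses to
\begin{equation*}
\bE \otimes \E \otimes \tilde\E[W_N^{\b,\hat\w^S}] = \bE \otimes \bE'[\exp(c_\b L_N)], \qquad c_\b := \l(2\b) - 2\l(\b), \quad L_N := \sum_{k=1}^N \mathbf{1}_{\{S_k = S'_k\}},
\end{equation*}
so the whole problem reduces to bounding uniformly in $N$ the exponential moment of the collision count of two independent copies of the walk.

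The bound will be obtained through a scale-separation argument based on condition (c). Define the good event
\begin{equation*}
E_N := \{\text{no increment of }S\text{ or of }S'\text{ in }[1,N]\text{ lies in the ``forbidden band''}\ (s_N, 2Ns_N)\}.
\end{equation*}
Combining (c) with the defining estimate $\bP[X_1 \geq s_N] \leq (\log N)^2/N$ gives $\bP[X_1 \in (s_N, 2Ns_N)] \leq (\log N)^2/N^{1+\g}$, and a union bound yields $\bP \otimes \bP'(E_N^c) \leq 2(\log N)^2/N^\g \to 0$ since $\g > 1/2$, a contribution I can absorb using a crude deterministic bound $e^{c_\b L_N} \leq e^{c_\b N}$ at the cost of slightly shrinking the scale $s_N$. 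On $E_N$ each increment is either \emph{small} ($|X| \leq s_N$) or \emph{huge} ($|X| \geq 2Ns_N$), and I decompose each trajectory along the (polylogarithmically many) huge-jump times: between consecutive huge jumps a walk stays in a window of radius at most $Ns_N$, while a single huge jump displaces it by strictly more than twice that radius, so the two walks can collide only inside a common ``small-jump epoch'' in which both walks are confined to a window of size $\ll s_N$.

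It then remains to bound $\bE \otimes \bE'[\exp(c_\b L^{\mathrm{ep}}_k)]$ within a single small-jump epoch of length $k$. Here conditions (a) and (b) enter: the unimodality and symmetry of $K$ from (a) give a local estimate of the form $\bP(\tilde S_k = 0) \leq C/s_N$ for the symmetric heavy-tailed difference walk $\tilde S := S - S'$ truncated at scale $s_N$, while the lower bound $K(n) \geq (\log\log n)^\a/(n(\log n)^2)$ from (b) guarantees that $s_N$ grows super-polynomially in $N$, which makes $\sum_{k\leq N} \bP(\tilde S_k = 0)$ bounded uniformly in $N$ and hence forces $L^{\mathrm{ep}}$ to be stochastically dominated by a Poisson variable of bounded intensity. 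Summing the resulting log-moments over the $O((\log N)^2)$ epochs yields the required uniform bound. The main obstacle I anticipate is the local density estimate $\bP(\tilde S_k = 0) \leq C/s_N$ for the small-jump walk: it is the step where the lower bound of (b) is used in an essential way, exactly at the borderline where the conjectural sharpness of \eqref{entropy} is expected to take place.
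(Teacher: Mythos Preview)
Your reduction has a fundamental problem that cannot be repaired within this framework. By taking the full expectation $\bE \otimes \E \otimes \tilde\E[W_N^{\b,\hat\w^S}]$ you are computing precisely the second moment $\E\bigl[(W_N^{\b,\w})^2\bigr]$ of the original partition-function martingale: your identity $\bE \otimes \E \otimes \tilde\E[W_N^{\b,\hat\w^S}] = \bE \otimes \bE'[\exp(c_\b L_N)]$ is correct, and the right-hand side is exactly the standard second-moment formula. But this quantity is uniformly bounded in $N$ if and only if $e^{c_\b} q < 1$, where $q = \bP \otimes \bP'[\exists\, n \geq 1: S_n = S'_n] \in (0,1)$ is the meeting probability of two independent copies of the walk; the total collision count $L_\infty$ has geometric tails with parameter $q$, so $\bE\bE'[e^{c_\b L_\infty}] < \infty$ precisely when $e^{c_\b} < 1/q$. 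Since $c_\b = \l(2\b) - 2\l(\b) \to \infty$ when the environment is unbounded, your target expectation diverges for all sufficiently large $\b$, and no epoch decomposition can salvage a uniform bound. The approach therefore cannot yield $\b_c = \infty$; it can at best recover the $L^2$ region, which is already known.

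The paper avoids this by \emph{not} averaging over the spine $S$. It applies Markov's inequality only with respect to $\P \otimes \tilde\P$, obtaining for each fixed $S$ the bound $\P \otimes \tilde\P[W_N \geq L] \leq L^{-1}\,\bE'\bigl[F(\b)^{L_N(S,S')}\bigr]$, and then shows that for $\bP$-typical $S$ this conditional expectation stays bounded. The mechanism is entirely different from your epoch scheme: conditions (b) and (c) are used, via a Borel--Cantelli argument comparing the largest and second-largest of $|X_1|,\dots,|X_n|$, to prove that $|S_n| > K^n$ eventually $\bP$-a.s.\ for every fixed $K$; condition (a) is used to deduce the unimodal estimate $\bP'[S'_n = S_n] \leq 1/|S_n|$. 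Together with the last-intersection decomposition $\bE'[F(\b)^{L_N}] \leq \sum_{n=0}^N F(\b)^n\,\bP'[S'_n = S_n]$, one gets an a.s.\ convergent series whatever the value of $\b$. The key point is that the \emph{quenched} super-exponential growth of the spine beats any fixed $F(\b)$; once you average over $S$ this information is destroyed and you are back to the second-moment threshold.
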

	\begin{proof}
		The idea for this proof is to fix a path $S$ and average with respect to the other variables, then show that the resulting sequence is uniformly bounded. By Markov's Inequality and Fubini's Theorem we have,
		\begin{equation}
		\begin{split}
		\P \otimes \tilde{\P} \left[W^{\b, \hat{\w}}_N \geq L \right] &\leq \frac{1}{L}  \E \otimes \tilde{\E} \left[\bE'\left[\exp\left(\sum_{n=1}^N \b \left( \w_{i,S'_i}\1{S_i \not= S'_i} + \tilde{\w}_{i,S'_i}\1{S_i = S'_i} \right) -\l(\b)  \right)  \right]   \right] \\ 
		& = \frac{1}{L} \bE' \left[ F(\b)^{|S_1^N \cap {S'}_1^N|}\right], 
		\end{split}
		\end{equation}
		where we write
		\begin{equation}
		W_N^{\b, \hat{\w}}= \bE'\left[\exp\left(\sum_{n=1}^N\left(  \b \w_{i,S'_i}\1{S_i \not= S'_i} + \tilde{\w}_{i,S'_i}\1{S_i = S'_i}\right)  -\l(\b)  \right)  \right] 
		\end{equation}
		with $\left(\bP', S' \right) $, an independent copy of $\left(\bP, S \right)$,  
		$F(\b):= \tilde{\E} \left[\exp\left(\b \tilde{\w} -\l(\b)\right)\right]$ and 
		\begin{equation}
		S_m^n := \left\lbrace (i,S_i) : m \leq i \leq n  \right\rbrace.
		\end{equation}
		Notice that it suffices to show that there exists some constant $K_\infty >0$ such that
		\begin{equation} \label{7}
		\bP \left[ \bE' \left[ F(\b)^{|S_1^N \cap {S'}_1^N|}\right] \leq K_\infty \right] \geq 1/2,
		\end{equation}
		since we might have
		\begin{equation}
		\bP \P \otimes \tilde{\P} \left[Z^{\b, \hat{\w}}_N \geq L \right] \leq \frac{ K_\infty}{L} + 1/2,
		\end{equation}
		which proves \eqref{6} by taking $L$ large enough. On the other hand, by considering the last time the paths $S$ and $S'$ intersect, we have
		\begin{equation}
		\begin{split} \label{8}
		\bE' \left[ F(\b)^{|S_1^N \cap {S'}_1^N|}\right] &= \sum_{n=0}^N \bE' \left[ F(\b)^{|S_1^n \cap {S'}_1^n|}  \1{S'_n = S_n} \1{S_{n+1}^N \cap {S'}_{n+1}^N = \emptyset }  \right]\\ 
		&\leq \sum_{n=0}^N F(\b)^n \bP' \left[S_n = S'_n \right]
		\end{split}
		\end{equation}
		In the second line, we use that $F(\b) \geq 1$. In fact, as we mentioned before, if the distribution of $\w$ is unbounded, $F(\b) \to \infty$ as $\b \to \infty$ (see Lemma \ref{sup}). To finish the proof we use two lemmas stated below. The first one is Theorem 2.1 from \cite{unimod} and states that the independent sum of two symmetric unimodal distributions is again unimodal. This implies that the distribution of $S_n$ is also symmetric and unimodal and that 
		\begin{equation}
		\bP' \left[S_n = S'_n \right] \leq \frac{1}{|S_n|}.
		\end{equation}
		In the second lemma below, we show that $S_n$ grows faster that any exponential, eventually almost surely. Here we use the crucial fact that $\a > 1$, the lemma being false otherwise. This implies that there exists $K_S > 0$, that might also depend on $\b$, such that
		\begin{equation} \label{9}
		\sum_{n=0}^\infty F(\b)^n \frac{1}{|S_n|} < K_S.
		\end{equation}
		This is sufficient to obtain \eqref{7} and conclude the proof of the theorem.
	\end{proof}
	\begin{lemma} \label{l1}
		Given $\left\lbrace X_1,X_2,...  \right\rbrace $ i.i.d. integer valued random variables, and the distribution of $X_1$ being unimodal and symmetric, then the distribution of $S_n = X_1 + ...+ X_n$ is also unimodal and symetric and 
		\begin{equation} \label{unimod2}
		\bP \left[S_n = x \right] \leq \frac{1}{|x|},
		\end{equation}
		for any $x \in \Z$.
	\end{lemma}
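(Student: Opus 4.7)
The plan is to establish symmetry and unimodality of $S_n$ by a direct induction on $n$, and then extract the pointwise bound \eqref{unimod2} from unimodality alone by a simple pigeonhole argument.

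First I would handle symmetry: if $X_1$ is symmetric then trivially $S_1$ is symmetric, and if $S_{n-1}$ and $X_n$ are both symmetric and independent, then $S_n = S_{n-1} + X_n$ has the same distribution as $-S_{n-1} - X_n = -S_n$, so induction closes symmetry for free. For unimodality, the single-step case is the hypothesis, and the induction step is exactly the content of Theorem~2.1 of \cite{unimod}: the convolution of two symmetric unimodal integer-valued distributions is again symmetric and unimodal. Thus, by induction on $n$, the law of $S_n$ is symmetric and unimodal, i.e.\ $k \mapsto \bP[S_n = k]$ is symmetric in $k$ and non-increasing in $|k|$.

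Second, to obtain the bound, fix $x \in \Z \setminus \{0\}$ and note that by unimodality, for every integer $k$ with $|k| \le |x|$ one has $\bP[S_n = k] \ge \bP[S_n = x]$. Since the set $\{k \in \Z : |k| \le |x|\}$ has exactly $2|x|+1$ elements, summing gives
\begin{equation}
1 \;\ge\; \sum_{|k| \le |x|} \bP[S_n = k] \;\ge\; (2|x|+1)\, \bP[S_n = x],
\end{equation}
so $\bP[S_n = x] \le \frac{1}{2|x|+1} \le \frac{1}{|x|}$. For $x = 0$ the bound is vacuous.

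There is no real obstacle here; the entire content sits in the cited convolution theorem. The only mild point worth flagging is that \cite{unimod} is stated for symmetric unimodal distributions (not for arbitrary unimodal ones, for which convolution can fail to be unimodal, as in the classical counterexample of Chung), so it is essential that the induction hypothesis carries both symmetry and unimodality together — which is precisely what the symmetry step above provides at each stage.
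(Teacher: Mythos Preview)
Your proof is correct and follows essentially the same approach as the paper: both cite \cite{unimod} for the preservation of symmetric unimodality under convolution (applied inductively), and both deduce the pointwise bound by summing $\bP[S_n=k]$ over points closer to the origin than $x$ and using monotonicity. The only cosmetic difference is that the paper sums over $0\le y\le x$ to get $1\ge x\,\bP[S_n=x]$ directly, whereas you sum over $|k|\le |x|$ to get the slightly sharper $\frac{1}{2|x|+1}$.
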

	\begin{proof}
		In \cite{unimod}, they show that the sum of two independent unimodal and symmetric random variables is also unimodal and symmetric. For \eqref{unimod2}, notice that
		\begin{equation}
		1 \geq \sum_{0 \leq y \leq x} \bP \left[S_n =  y \right] \geq x \bP \left[S_n =  x \right]. 
		\end{equation} 
	\end{proof}
	\begin{lemma} \label{l2}
		Given $\left\lbrace X_1,X_2,...  \right\rbrace $ i.i.d. integer valued random variables and assuming that the distribution of $X_1$ satisfies
		\begin{equation} \label{c1}
		\bP\left[ X_1 \geq n \right]   \geq \frac{C (\log \log n)^\alpha}{ \log n},
		\end{equation}
		for $\alpha > 1$, $C > 0$ and
		\begin{equation}
		\frac{\bP\left[X_1 \in (s_n, 2n s_n) \right]}{\bP\left[X_1 \geq s_n \right]} \leq \frac{1}{n^\g},
		\end{equation}
		where $\g > \dfrac{1}{2}$ and
		\begin{equation}
		s_n := \min \left\lbrace s  \in \N: \bP\left[X_1 \geq s \right] \leq \frac{\left(\log n \right)^2 }{n}\right\rbrace, 
		\end{equation}
		for all $n$ sufficiently large, then for all constant $K > 0$, 
		\begin{equation} \label{10}
		|S_n| > K^n,
		\end{equation}
		for all paths $S$, eventually for all $n$ large enough, $\bP$-a.s.	
	\end{lemma}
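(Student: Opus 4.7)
The plan is to show that for every fixed $K > 1$, $|S_n| > K^n$ eventually $\bP$-almost surely, by arguing that $S_n$ is dominated by a single ``big jump'' $X_{i^*}$ with $i^*$ the index achieving $\max_i |X_i|$, whose magnitude is already super-exponentially large.

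\smallskip
\emph{Step 1 (Maximum).} Let $M_n := \max_{i \leq n} |X_i|$. Substituting $K^n$ into the tail lower bound \eqref{c1} gives, for $n$ large and a constant $C' > 0$ depending on $C, \alpha, K$,
\begin{equation*}
\bP[|X_1| > K^n] \;\geq\; \frac{2C(\log \log K^n)^{\alpha}}{\log K^n} \;\geq\; \frac{C'(\log n)^{\alpha}}{n}.
\end{equation*}
Hence $\bP[M_n \leq K^n] \leq \bigl(1-\bP[|X_1|>K^n]\bigr)^{n} \leq \exp\bigl(-C'(\log n)^{\alpha}\bigr)$, which is summable when $\alpha > 1$ since it decays faster than any polynomial in $n$. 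Borel--Cantelli then gives $M_n > K^n$ eventually a.s.

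\smallskip
\emph{Step 2 (Residual).} Writing $|S_n| \geq M_n - |R_n|$ with $R_n := S_n - X_{i^*}$, it would suffice to show $|R_n| \leq K^n/2$ eventually a.s., since applying Step 1 with $2K$ in place of $K$ then yields $|S_n| \geq 2K^n - K^n/2 > K^n$. I would control $|R_n|$ by partitioning the non-maximal $X_j$'s into three scales dictated by condition (c): \emph{small} terms with $|X_j| \leq s_n$, contributing at most $n s_n$; \emph{medium} terms with $|X_j| \in (s_n, 2n s_n)$, whose existence has probability at most $2(\log n)^2/n^{\g}$ by (c) and a union bound, hence eventually empty along a dyadic subsequence (for which $(\log n_k)^2/n_k^{\g}$ is summable when $\g > 1/2$) and then interpolated to intermediate $n$; \emph{large} non-maximal terms with $|X_j| \geq 2n s_n$, which number $O((\log n)^2)$ in expectation by Markov, and whose combined magnitude is controlled by comparing to $M_n$ using the gap provided by (c) at scale $s_n$.

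\smallskip
\emph{Main obstacle.} The hardest point is controlling the interaction between $M_n$ and the second-largest order statistic of $(|X_i|)_{i \leq n}$, because the slowly varying tail in \eqref{c1} allows many $|X_i|$'s to be of comparable order---the ``one big jump'' heuristic is not automatic here. Condition (c) supplies the crucial quantitative gap: once $|X_1| > s_n$, it jumps essentially to $\geq 2n s_n$. The threshold $\g > 1/2$ is precisely calibrated so that this ``no medium term'' event becomes tractable via Borel--Cantelli along a dyadic subsequence, with the interpolation argument being the delicate technical point that ties the whole scheme together.
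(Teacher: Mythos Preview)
Your Step~1 is exactly the paper's argument and is fine. The gap is in Step~2: you aim to prove the \emph{absolute} bound $|R_n|\le K^n/2$, but this target is not achievable under the hypotheses. The lemma gives only a \emph{lower} bound on the tail $\bP[X_1\ge m]$, so $s_n$ can be enormous---for instance if $\bP[X_1\ge m]\asymp 1/\log\log m$ (which satisfies \eqref{c1}), then $s_n\ge \exp\exp\bigl(cn/(\log n)^2\bigr)$, doubly exponential in $n$. Your ``small'' contribution $n s_n$ then dwarfs $K^n$ for every fixed $K$, and the scheme collapses. The same problem afflicts your ``large'' class: there can be several indices with $|X_j|\ge 2n s_n$ and each of these can itself be of order $M_n$; condition (c) creates a gap only at the scale $(s_n,2ns_n)$ and says nothing about a gap between the second-largest and the largest above $2ns_n$.

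The paper avoids this by working \emph{relatively} rather than absolutely: it shows directly that the second order statistic satisfies $X^{(n-1)}_n\le \tfrac{1}{2n}X^{(n)}_n$ eventually a.s., which immediately gives $|S_n|\ge X^{(n)}_n-(n-1)X^{(n-1)}_n\ge \tfrac12 X^{(n)}_n\ge\tfrac12 K^n$. The probability $\bP\bigl[X^{(n-1)}_n>\tfrac{1}{2n}X^{(n)}_n\bigr]$ is bounded using (c) and the definition of $s_n$ by $O\bigl((\log n)^4/n^{\gamma}\bigr)$. Since $\gamma>1/2$ this is summable only along the subsequence $n^2$, so an interpolation is indeed needed---but not the one you sketch. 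The paper couples $(|X_i|)$ with uniforms and shows that with summable failure probability the second maximum is updated at most once between $n^2$ and $(n+1)^2$; hence for every intermediate $t$ the pair $\bigl(X^{(t)}_t,X^{(t-1)}_t\bigr)$ coincides with the pair at either $n^2$ or $(n+1)^2$, and the relative inequality propagates. Your dyadic-subsequence idea, combined with the absolute target $|R_n|\le K^n/2$, would not survive this interpolation even if the target were correct, because the order statistics along the whole block must be controlled simultaneously.
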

	\begin{proof}
		Given the first increments $\left\lbrace X_1,...,X_n\right\rbrace $, let $X^{(n)}_n$ and $X^{(n-1)}_n$ be the highest and second highest values among $\left\lbrace |X_1|,...,|X_n|\right\rbrace $. The proof of the lemma relies on two facts: the maximum $X^{(n)}_n$ satisfies \eqref{10}, i.e., 
		\begin{equation} \label{11}
		X^{(n)}_n > K^n,
		\end{equation}
		eventually $\bP-a.s.$, and that $X^{(n)}_n$ and $S_n$ have roughly the same order, since
		\begin{equation} \label{12}
		X^{(n-1)}_n \leq \frac{1}{2n} X^{(n)}_n,
		\end{equation}
		for some constant $\d > 0$, eventually $\bP-a.s.$ In fact, inequalities \eqref{11} and \eqref{12} imply,
		\begin{equation} \label{ineq2}
		|S_n| \geq X^{(n)}_n - (n-1) X^{(n-1)}_n \geq X^{(n)}_n - \frac{(n-1)}{2n}X^{(n)}_n\geq \frac{1}{2} K^n,
		\end{equation} 
		eventually $\bP-a.s.$ To show \eqref{11}, observe that
		\begin{equation}
		\bP \left[ X^{(n)}_n \leq K^n \right] = \left( 1 - \bP \left[X_1 > K^n \right]\right) ^n \leq \left( 1 - \frac{C_K (\log n)^\alpha}{n}\right)^n \leq e^{-C_K (\log n)^\alpha},   
		\end{equation}
		for some constant $C_K >0$, which by Borel-Cantelli's Lemma, implies \eqref{11}. For \eqref{12} we have that, for $s \leq t \in \N$
		\begin{equation}
		\bP\left[X^{(n-1)}_n = s,  X^{(n)}_n = t \right] \leq {n \choose 2} \bP\left[X_1 \leq s \right]^{n-2} \bP\left[X_1 = s \right] \bP\left[X_1 = t \right].    
		\end{equation}
		Then,
		\begin{equation}
		\bP\left[X^{(n-1)}_n > \frac{1}{2n} X^{(n)}_n \right] \leq \sum_{s=0}^\infty {n \choose 2} \bP\left[X_1 \leq s \right]^{n-2} \bP\left[X_1 = s \right] \bP\left[X_1 \in (s, 2ns) \right].    
		\end{equation}
		We split the last sum into two parts. The sum up to $s = s_n - 1$ can be bounded by
		\begin{equation}
		\sum_{s=0}^{s_n - 1} {n \choose 2} \left( 1 - \frac{\left( \log n \right)^2 }{n} \right)^{n-2} \bP\left[X_1 = s\right] \leq C' n^2 e^{-(\log n)^2} . 	
		\end{equation}
		The second part of the sum can be bounded by
		\begin{equation} \label{ineq}
		\sum_{s= s_n }^{\infty} {n \choose 2} \bP\left[X_1 = s \right] \bP\left[X_1 \in (s_n, 2n s_n) \right] \leq \frac{n^2 \bP\left[X_1 \geq s_n \right]^2 }{n^\g} \leq \frac{(\log n)^4}{n^\g}. 
		\end{equation}
		Unfortunately, the last inequality is not enough to directly conclude \eqref{11} by Borel-Cantelli's Lemma, as $\g$ might be smaller or equal that $1$. In order to overcome this, let us consider $\left\lbrace U_i: i \in \N \right\rbrace$ a sequence of independent, Uniform-$[0,1]$ random variables, and let us couple the i.i.d. sequence $\left\lbrace |X_1|,|X_2|,...  \right\rbrace $ with the sequence $\left\lbrace F^{-1}(U_1),F^{-1}(U_2),...  \right\rbrace $, where $F$ is the cumulative distribution function $F(x) := \bP \left[  |X_1| \leq x \right] $ and $F^{-1}$, the generalized inverse distribution function, defined as
		\begin{equation}
		F^{-1}(p)=\inf\{x\in \mathbb {R} :F(x)\geq p\},
		\end{equation}
		for $p\in [0,1]$. As before, let us denote by $U^{(n)}_n$ and $U^{(n-1)}_n$, the highest and second highest values among $\left\lbrace U_1,...,U_n\right\rbrace $. 
		In particular, this implies that $X_n^{(n)} = F^{-1}(U^{(n)}_n)$ and $X_n^{(n-1)} = F^{-1}(U^{(n-1)}_n)$. Consider the random variable $\t_n^{(1)}$, as the first time after $n$, such us the second maximum $U^{(k-1)}_k$ needs to be updated, i.e., 
		\begin{equation}
		\t_n^{(1)}:= \min \left\lbrace k > n: U_k > U^{(n-1)}_n \right\rbrace.
		\end{equation}
		and  analogously, let $\t_n^{(2)}$ be the second time after $n$, such us the second maximum is updated:
		\begin{equation}
		\t_n^{(2)}:= \min \left\lbrace k > \t_n^{(1)}: U_k > U^{(\t_n^{(1)}-1)}_{\t_n^{(1)}} \right\rbrace.
		\end{equation}
		Define the events
		\begin{equation}
		\mathcal{B}:= \left\lbrace X^{(n)}_n > K^n, \text{ for all } n \text{ sufficiently large}\right\rbrace, 
		\end{equation}
		\begin{equation}
		\mathcal{C}:= \left\lbrace X^{(n^2-1)}_{n^2} \leq \frac{1}{2n^2} X^{(n^2)}_{n^2},\text{ for all } n \text{ sufficiently large}\right\rbrace, 
		\end{equation}
		and
		\begin{equation}
		\mathcal{D} := \left\lbrace \t_{n^2}^{(2)} \geq (n+1)^2,\text{ for all } n \text{ sufficiently large} \right\rbrace. 
		\end{equation}
		We show that on the intersection of the three events, Inequality \eqref{10} holds. In fact, on $\mathcal{B} \cap \mathcal{C}$ we have that $|S_{n^2}| \geq  \frac{1}{2} K^{n^2}$ for all $n$ large enough. By intersecting with the event $\mathcal{D}$ we have that between $n^2$ and $(n+1)^2$, the second maximum is updated at most one time. This implies that for all $t \in (n^2, (n+1)^2)$, the pair $\left(X_t^{(t-1)},X_t^{(t)} \right) $ is either $\left(X_{n^2}^{(n^2-1)},X_{n^2}^{(n^2)} \right)$ or $\left(X_{(n+1)^2}^{((n+1)^2-1)},X_{(n+1)^2}^{((n+1)^2)} \right)$. If it is equal to $\left(X_{n^2}^{(n^2-1)},X_{n^2}^{(n^2)} \right)$ we have that
		\begin{equation}
		|S_{t}| \geq X_{n^2}^{(n^2)} - (t-1)X_{n^2}^{(n^2-1)} \geq  X_{n^2}^{(n^2)} - \frac{(n+1)^2-1}{2n^2}X_{n^2}^{(n^2)} \geq \frac{1}{3} K^t.
		\end{equation}  
		If it is equal to $\left(X_{(n+1)^2}^{((n+1)^2-1)},X_{(n+1)^2}^{((n+1)^2)} \right)$ we obtain
		\begin{equation}
		|S_{t}| \geq  X_{(n+1)^2}^{((n+1)^2)} - (t-1)X_{(n+1)^2}^{((n+1)^2-1)} \geq  X_{(n+1)^2}^{((n+1)^2)} - \frac{(n+1)^2-1}{2(n+1)^2}X_{(n+1)^2}^{((n+1)^2)} \geq \frac{1}{2} K^t.
		\end{equation}
		To finish the proof of the lemma we verify that the three events have probability one.  $\bP\left[\mathcal{B} \right]=1 $ was already shown in \eqref{ineq2}. As $\g > 1/2$, the upper bound obtained in \eqref{ineq} suffices to obtain that $\bP\left[\mathcal{C} \right]=1 $. As for $\mathcal{D}$ we have
		\begin{equation}
		\begin{split}
		\bP\left[ \t_{n^2}^{(2)} < (n+1)^2 \right] &\leq \bP\left[\exists i, j \in (n^2, (n+1)^2): U_i > U_{n^2}^{(n^2-1)}, U_j > U_{n^2}^{(n^2-1)} \right] \\
		&\leq (2n)^2  \bP\left[ U_{n^2+1} > U_{n^2}^{(n^2-1)} \right]^2 
		\end{split}
		\end{equation}
		Since $ U_{n^2+1}$ and $U_{n^2}^{(n^2-1)}$ are independent, their joint distribution can be computed explicitly \cite{order}. This yields
		\begin{equation}
		\bP\left[ U_{n^2+1} > U_{n^2}^{(n^2-1)} \right] = \int_0^1 \int_0^1  n^2(n^2-1)u^{n^2-2}(1-u) \1{v>u} dv du = \frac{2}{n^2+1},
		\end{equation}
		which proves the lemma.
\end{proof}
 \appendix
\section{Properties of the exponential moment}

\begin{lemma} \label{sup}
	Let $\w$ be a random variable with $\E\left[e^{\b \w} \right] < \infty$ for all $\b > 0$, $s = \text{ess sup }\w$ and $\l(\b) := \log \E\left[e^{\b \w} \right]$.
	\begin{itemize}
		\item[a)] Let $K < s$, then
		\begin{equation}
		\lim_{\b \to \infty} \tilde{\P}^\b \left[\tilde{\w} < K \right] \to 0,
		\end{equation}
		\item[b)] $\lim_{\b \to \infty} \l'(\b) = s$,
		\item[c)] $\lim_{\b \to \infty} \b \l'(\b) - \l(\b)= -\log \P\left[\eta = s\right].$
	\end{itemize}
\end{lemma}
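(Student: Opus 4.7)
The plan is to handle the three parts in the stated order, each feeding into the next.

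For (a), I would run a Chernoff-type estimate on the tilted measure: $\tilde{\P}^\b[\tilde{\w} < K] = e^{-\l(\b)}\E[e^{\b\w}\1{\w < K}] \leq e^{\b K - \l(\b)}$, reducing the problem to a sufficient lower bound on $\l(\b)$. Picking any $K' \in (K,s)$, the definition of $s=\mathrm{ess\,sup}\,\w$ forces $\P[\w \geq K'] > 0$, and the one-event bound $\E[e^{\b\w}] \geq e^{\b K'}\P[\w \geq K']$ yields $\l(\b) \geq \b K' + \log\P[\w \geq K']$. Substituting gives $\tilde{\P}^\b[\tilde{\w} < K] \leq e^{-\b(K'-K)}/\P[\w \geq K'] \to 0$.

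For (b), differentiating gives $\l'(\b) = \tilde{\E}^\b[\tilde{\w}]$. The upper bound $\l'(\b) \leq s$ is immediate when $s < \infty$ (from $\w \leq s$ a.s.) and vacuous when $s = \infty$. For the matching lower bound, fix $K < s$ and split $\l'(\b) = \tilde{\E}^\b[\tilde{\w}\1{\tilde{\w} \geq K}] + \tilde{\E}^\b[\tilde{\w}\1{\tilde{\w} < K}]$. The first piece is at least $K\tilde{\P}^\b[\tilde{\w} \geq K] \to K$ by (a). For the second, the elementary bound $|\w|e^{\b\w} \leq e^{\b K}|\w|$ on $\{\w < K\}$ (valid for $\b > 0$) gives $|\tilde{\E}^\b[\tilde{\w}\1{\tilde{\w} < K}]| \leq e^{\b K - \l(\b)}\E[|\w|] \to 0$, where $\E|\w| < \infty$ follows from the finiteness of all exponential moments and $e^{\b K - \l(\b)} \to 0$ by the estimate on $\l(\b)$ already used in (a). Hence $\liminf_\b \l'(\b) \geq K$ for every $K < s$, which forces $\lim_\b \l'(\b) = s$.

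For (c), the cleanest route is Legendre duality. Define $\l^*(x) := \sup_\b(\b x - \l(\b))$; convexity and differentiability of $\l$ give $\l^*(\l'(\b)) = \b\l'(\b) - \l(\b)$, since the sup is attained at $\b$. First I compute $\l^*(s)$: the identity $\b s - \l(\b) = -\log\E[e^{-\b(s-\w)}]$ shows this map is nondecreasing in $\b$ (derivative $s - \l'(\b) \geq 0$), and dominated convergence applied to $e^{-\b(s-\w)} \searrow \1{\w = s}$ yields $\E[e^{-\b(s-\w)}] \to \P[\w = s]$; hence $\l^*(s) = \lim_{\b \to \infty}(\b s - \l(\b)) = -\log\P[\w = s]$. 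For the upper bound, $\l^*(\l'(\b)) = \b\l'(\b) - \l(\b) \leq \b s - \l(\b) \leq \l^*(s)$. For the lower bound, apply the sup definition at $x = \l'(\b)$ with an arbitrary test parameter $\b_0$: $\l^*(\l'(\b)) \geq \b_0\l'(\b) - \l(\b_0)$, which by (b) tends to $\b_0 s - \l(\b_0)$; optimizing over $\b_0$ yields $\liminf_\b \l^*(\l'(\b)) \geq \l^*(s)$. The two bounds combine.

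The main subtlety lies in (c) when $\P[\w = s] = 0$ (in particular if $s = \infty$, or $s$ finite but not attained). There the target value is $+\infty$, and a naive decomposition $\b\l'(\b) - \l(\b) = (\b s - \l(\b)) - \b(s - \l'(\b))$ confronts a delicate $\infty - \infty$ cancellation, since $\b(s - \l'(\b))$ need not vanish (e.g.\ for $\w \sim \mathrm{Unif}[0,1]$ one computes $\b(s-\l'(\b)) \to 1$). Routing through $\l^*$ sidesteps this: both bounds use only monotonicity of $\b \mapsto \b s - \l(\b)$ together with $\l'(\b) \to s$ from (b), and work uniformly whether $\l^*(s)$ is finite or infinite.
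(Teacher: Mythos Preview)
Your proof is correct. Parts (a) and (b) essentially match the paper's argument; you are in fact slightly more careful in (b), since the paper simply drops the piece $\tilde\E^\b[\tilde\w\1{\tilde\w<K}]$, which is only nonnegative when $\w\ge 0$ a.s., whereas you bound it explicitly. One minor technicality: under the lemma's stated hypothesis ($\l(\b)<\infty$ for $\b>0$ only) the negative tail of $\w$ is uncontrolled, so $\E|\w|$ need not be finite; this is easily patched by noting $|\w|e^{\b\w}\1{\w<0}\le 1/(\b e)$ pointwise, and in any case the paper's global assumption $\l(\b)<\infty$ for all $\b\in\R$ makes $\E|\w|<\infty$ automatic.

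For (c) the approaches diverge. The paper sandwiches $\tilde\P^\b[\tilde\w\ge K]$ between $e^{\b(s-\eps)-\l(\b)}\P[\w=s]$ and $e^{\b s-\l(\b)}(\P[\w=s]+\eps)$, uses $\tilde\P^\b[\tilde\w\ge K]\to 1$ to conclude $\b s-\l(\b)\to -\log\P[\w=s]$, and then declares (c) proved---leaving implicit the passage from $\b s-\l(\b)$ to $\b\l'(\b)-\l(\b)$, precisely the $\b(s-\l'(\b))$ term you flag. That gap can be filled when $\P[\w=s]>0$ (e.g.\ by dominated convergence on $\b(s-\w)e^{-\b(s-\w)}$), but the paper does not do so and says only that the case $s=\infty$ is analogous. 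Your route via $\b\l'(\b)-\l(\b)=\l^*(\l'(\b))$, squeezing with $\l^*(\l'(\b))\le \l^*(s)$ above and $\l^*(\l'(\b))\ge \b_0\l'(\b)-\l(\b_0)\to \b_0 s-\l(\b_0)$ below, handles the finite and infinite cases uniformly and avoids the $\infty-\infty$ issue entirely. The cost is a small dose of convex-analytic language; the gain is a proof that is actually complete in all cases.
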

\begin{proof}
The idea for this proof is that the sequence of measures $\P^\b$, induced by the density
\begin{equation}\label{func1}
\frac{\text{d}\tilde{\P}^{\b}}{\text{d} \P} = e^{\b \w - \l(\b)}, 
\end{equation}
tend to put all the mass on the essential supremum $s$ of $\w$, as $\b \to \infty$.	
	\begin{itemize}
		\item[a)] Let $K' > 0$ such that $K < K'< s$. Then $\P\left[ \w > K' \right] = \d > 0$ and
		\begin{equation}
		\tilde{\P}^\b \left[\tilde{\w} < K \right] = \frac{\E\left[e^{\b \w} \1{\w < K}\right] }{\E\left[e^{\b \w}\right]} \leq \frac{e^{\b K} }{\E\left[e^{\b \w} \1{\w > K'}\right]} \leq \frac{e^{-\b(K'-K)}}{\d} \to 0,
		\end{equation}
		as $\b \to \infty$.
		\item [b)]Assume $s < \infty$ for the rest of the proof. The case $s = \infty$ is analogous. Fix $\eps >0$. Let $K>0$ such that $0 < s- K < \eps$. Notice that $\l'(\b) = \tilde{\E}^\b \left[\tilde{\w} \right]  = \E\left[\w e^{\b \w - \l(\b)} \right] \leq s  $ and
		\begin{equation}
		\tilde{\E}^\b \left[\tilde{\w} \right] \geq \tilde{\E}^\b \left[\tilde{\w} \1{ \tilde{\w} \geq K}\right] \geq (s-\eps) \P\left[\tilde{\w} \geq K \right] \geq (s-\eps)(1-\eps),
		\end{equation}
		for some large enough $\b$, by the previous item. This proves b).
		\item[c)] Given $\eps > 0$, let $K > 0$ such that $0 < s- K < \eps$ and $\P\left[ \w \geq K \right] \leq \P\left[ \w = s \right] + \eps.$ Then, 
		\begin{equation}
		\begin{split}
		e^{\b s  - \l(\b)} &\left( \P\left[ \w = s \right] + \eps \right)   \geq  \E\left[e^{\b \w - \l(\b)} \1{\w \geq K}\right] = \tilde{\P}^\b\left[\tilde{\w} \geq K \right] \\ &\geq e^{\b K - \l(\b)} \P\left[ \w \geq K\right]  \geq e^{\b(s - \eps)  - \l(\b)}\P\left[ \w = s \right]. 
		\end{split}
		\end{equation}
		Applying logarithms and taking $\b \to \infty$ we obtain
		\begin{equation}
		\lim_{\b \to \infty} \b s - \l(\b) = -\log \P\left[ \w = s \right], 
		\end{equation}
		which proves c).
	\end{itemize}
\end{proof}


\begin{thebibliography}{09}
	




\bibitem{albeverio}
S. Albeverio, X. Zhou \textit{A martingale approach to directed polymers in a random environment} J. Theoret. Probab. \textbf{9} (1996) 171–189.	

\bibitem{Alexander}	
K. Alexander, G Yldrm \textit{Directed polymers in a random environment
	with a defect line} Electron. J. Probab., Vol. \textbf{20}, pp. no. 6, 1–20, (2015).	

\bibitem{alexzyg}
K. Alexander, N. Zygouras. \textit{Equality of critical points for polymer depinning transitions with loop exponent one}. Ann. Appl. Probab. 20 (2010), no. 1, 356-366. doi:10.1214/09-AAP621. https://projecteuclid.org/euclid.aoap/1262962326


\bibitem{bates}
E. Bates, \textit{Localization of directed polymers with general reference walk}
Electron. J. Probab. Volume 23 (2018), paper no. 30, 45 pp.

\bibitem{berger}
Q. Berger, N. Torri, \textit{Directed polymers in heavy-tail random environment}
(pre-print)  arXiv:1802.03355 [math.PR]

\bibitem{bolthausen}
E. Bolthausen,
\textit{A Note on the Diffusion of Directed Polymers   in a Random Environment},
Commun. Math. Phys. \textbf{123} (1989) 529-534.


\bibitem{cometsyoshido}  
F. Comets, T. Shiga, N. Yoshida
\textit{Directed polymers in a random
	environment: path localization and strong
	disorder},
Bernoulli 9 (4) (2003) 705–723 

\bibitem{comets}
F. Comets, N. Yoshida
\textit{Directed Polymers in Random Environment are Diffusive at Weak
	Disorder},
Ann. Probab. \textbf{34}
(2006).

\bibitem{review}
F. Comets, T. Shiga, and N. Yoshida. \textit{Probabilistic analysis of directed polymers in a random environment:
	a review} Stochastic analysis on large scale interacting systems, \textbf{39}, Math. Soc. Japan, Tokyo (2004) 115–142.

\bibitem{saintf}
F. Comets\textit{ Directed Polymers in Random Environments} Ecole d’Eté de probabilités de SaintFlour.
Springer International Publishing \textbf{2175} (2016).

\bibitem{comets long jump}
F. Comets \textit{Weak disorder for low dimensional polymers:
	The model of stable laws} Markov. Process. Related Fields \textbf{13} (2006) 681–696. 

\bibitem{order}
David, H. A.; Nagaraja, H. N. \textit{Order Statistics}. Wiley Series in Probability and Statistics (2003).  doi:10.1002/0471722162




\bibitem{huse}
D. A. Huse, C. L. Henley \textit{Pinning and roughening of domain walls in ising systems due to random
	impurities} Phys. Rev. Lett. \textbf{54} (1985) 2708–2711.


\bibitem{Imbrie}
J.Z. Imbrie, T. Spencer \textit{Diffusion of directed polymer in a random environment} J. Stat.
Phys. \textbf{52} (1988) 609-626.

\bibitem{hubert}
H. Lacoin
\textit{New Bounds for the Free Energy of Directed
	Polymers in Dimension 1 + 1 and 1 + 2},
Commun. Math. Phys. 294, 471–503 (2010)  


\bibitem{brownian}
H. Lacoin, \textit{Influence of spatial correlation for directed polymers}. Ann. Probab. 39 (2011), no. 1, 139--175. doi:10.1214/10-AOP553. https://projecteuclid.org/euclid.aop/1291388299


	\bibitem{oriented}
H. Lacoin, \textit{Existence of an intermediate phase for oriented percolation}, Electronic Journal of Probability 17 (2012) 41, 1-17 arXiv:1201.4552

\bibitem{mat}
Mitsuharu Miura, Yoshihiro Tawara, and Kaneharu Tsuchida. Strong and weak disorder for levy directed polymers in random environment. Stochastic Analysis and Applications, 26(5):1000-1012, 2008.
%

\bibitem{unimod}
S. Purkayastha \textit{Simple proofs of two results on convolutions of unimodal distributions}, Statistics \& Probability Letters
Volume 39, Issue 2, 97-100.

\bibitem{julien}
J. Sohier \textit{Hierarchical pinning model: low disorder relevance in the b=s case}
arxiv pre-print arXiv:1408.3208

\bibitem{zhou}
R. Song, X. Zhou
\textit{A Remark on Diffusion of Directed Polymers in
	Random Environments},
Journal of Statistical Physics. 1Iol. 85. Nos. I/2. 1996


\bibitem{toninelli}
F. Toninelli \textit{Coarse graining, fractional moments and the critical slope of random copolymers}, Electron. Journal Probab. 14, 531–547 (2009)

\bibitem{viveros}
R. Viveros  \textit{Directed polymers in $\g$-stable random environments} ArXiv e-prints (2019). arxiv.org/abs/1903.05058

\bibitem{Watbled}
F. Watbled \textit{Sharp asymptotics for the free energy of $1+1$ dimensional
	directed polymers in an infinitely divisible environment}. Electron. Commun.
Probab., Vol. \textbf{17}, pp. no. 53, 9, (2012).

\bibitem{wei}
Wei, R.: On the long-range directed polymer model. J. Stat. Phys. 165(2), 320–350 (2016)


\end{thebibliography}
\end{document}